\title{Locally uniform domains and extension of bmo functions}
\author[Butaev]{Almaz Butaev}
	\address{(A.B.) Department of Mathematical Sciences, P.O. Box 210025, University of Cincinnati, Cincinnati, OH 45221–0025, U.S.A}
	\email{butaevaz@ucmail.uc.edu}
	\author[Dafni]{Galia Dafni}
	\address{(G.D.) Concordia University, Department of Mathematics and Statistics, Montr\'{e}al, QC H3G 1M8, Canada}
	\email{galia.dafni@concordia.ca}
\thanks{G.D. was partially supported by the Natural Sciences and Engineering Research Council (NSERC) of Canada, and the Centre de recherches math\'e{}matiques (CRM)}
\subjclass[2020]{42B35, 46E33, 30C60}
\keywords{extension domain, uniform domain, $\ed$-domain, quasihyperbolic metric, bounded mean oscillation}
\date{}
\newenvironment{theoremA}[1]{%
\manualtheoreminner
}{\endmanualtheoreminner}
\newcommand{\R}{\mathbb{R}}
\newcommand{\N}{\mathbb{N}}
\newcommand{\Rn}{{\R}^n}
\newcommand{\ed}{{(\epsilon,\delta)}}
\newcommand{\dist}{\textup{dist}}
\newcommand{\diam}{\textup{diam}}
\newcommand{\led}{\lambda_{\epsilon, \delta}}
\newcommand{\Tlam}{{T_\lambda}}
\newcommand{\cD}{\mathfrak{D}}
\newcommand{\BMOo}{{\BMO(\Omega)}}
\newcommand{\Loneloc}{{L^1_{\text{loc}}}}
\newcommand{\bOmega}{{\partial \Omega}}
\newcommand{\dom}{{d_\Omega}}
\newcommand{\dOmega}{{d_\Omega}}
\newcommand{\ko}{{k_\Omega}}
\newcommand{\kOmega}{{k_\Omega}}
\newcommand{\jo}{{j_\Omega}}
\newcommand{\jOmega}{{j_\Omega}}
\newcommand{\Omegal}{\mathring{\Omega}_{\lambda}}
\newcommand{\BMO}{{\rm BMO}}
\newcommand{\bmo}{{\rm bmo}}
\newcommand{\VMO}{{\rm VMO}}
\newcommand{\bmol}{{\bmo_\lambda}}
\newcommand{\bmolo}{{\bmo_\lambda(\Omega)}}
\newcommand{\cN}{\mathcal{N}}
\newcommand{\cA}{\mathcal{A}}
\newcommand{\ra}{\rightarrow}
\newtheorem{theorem}{Theorem}[section]
\newtheorem{lem}[theorem]{Lemma}
\newtheorem{definition}[theorem]{Definition}
\newtheorem{prop}[theorem]{Proposition}
\newtheorem{cor}[theorem]{Corollary}
\newtheorem{remark}[theorem]{Remark}
\begin{document}
\maketitle
\begin{abstract}We prove that for a domain $\Omega \subset \Rn$, being $\ed$ in the sense of Jones is equivalent to being an extension domain for $\bmo$, the nonhonomogeneous version of the space of function of bounded mean oscillation on $\Omega$.  In the process we identify $\ed$-domains with a local version of uniform domains, defined by requiring the presence of length cigars between nearby points.  Our results show that the definition of $\bmo(\Omega)$ is closely tied to the geometry of the domain.
\end{abstract}

\section{Introduction} 
Let $\Omega$ be a domain (open connected set) in $\Rn$ and consider a space $X$ of functions on $\Omega$. We are interested in the relationship between the geometry of the domain $\Omega$ and the existence of a bounded extension operator $T:X(\Omega) \to X(\Rn)$. Can we characterize the domain in terms of such extensions, namely give conditions on $\Omega$ which are necessary and sufficient for the existence of the operator $T$?  If such a map exists,  we say that the domain is an {\em extension domain} for the given function space $X$. We refer to \cite{brudnyis1} and \cite{brudnyis2} for a comprehensive account of  extension problems for different function spaces $X$.  

Although extension domains have been widely studied, the problem of their complete characterization is still open in general, even for the classical Sobolev spaces $W^{s,p}$ (see however \cite{shvartsman_zobin}). When $X=\BMO$,  a complete description of the extension domains has been given by Jones \cite{jones1}. The space $\BMO(\Omega)$ consists of functions $f$ which are locally integrable in $\Omega$ and have bounded mean oscillation in $\Omega$, that is, 
\begin{equation}
\label{def-BMOo}
\|f\|_\BMOo:= \sup_{Q\subset \Omega} \fint_Q |f(x) - f_Q| dx < \infty,
\end{equation}  
where here and in what follows $Q$ denotes a cube with sides parallel to the axes, $|Q|$ is its measure and $f_Q = \fint_Q f :=  \frac{1}{|Q|} \int_Q f dx$ is the mean of $f$ over $Q$.  The seminorm $\|f\|_\BMOo$ defines a norm modulo constants.

Jones gave a geometric condition on the domain which was necessary and sufficient for the existence of a bounded extension map from $\BMO(\Omega)$ to $\BMO(\Rn)$.  This condition was shown in \cite{gehring_osgood} to be equivalent to $\Omega$ being a {\em uniform domain}, previously defined in \cite{martio_sarvas} (see also \cite{vaisala}).  In recent work of the authors \cite{ButaevDafni}, uniform domains are identified as extension domains for the space $\VMO$ of functions of vanishing mean oscillation.

In \cite{jones2}, Jones introduced local versions of uniform domains called $\ed$-domains (uniform domains are $(\epsilon, \infty)$). 
He showed that such domains are extension domains for the Sobolev spaces $W^{s,p}$, $1 \leq p \leq \infty$, $s \in \N$ and this result is sharp in dimension $2$: if a finitely connected open set is an extension domain for all Sobolev spaces, then it is an  $(\epsilon, \delta)$ domain (this fails in higher dimensions - for recent results see \cite{Rajala}). Christ in \cite{christ} extended Jones' results to certain spaces of fractional smoothness which had been simultaneously studied by DeVore and Sharpley in \cite{devore_sharpley} and later in \cite{devore_sharpley2}; there followed many further extensions and variations - for some recent examples see \cite{BreitCianchi,koskela}.

From the results of Jones and Christ it follows that uniform domains are extension domains for the homogeneous Sobolev spaces, and $\ed$-domains for the nonhomogeneous ones.  BMO can be considered as the zero smoothness endpoint of the homogeneous case.  The natural question then arises in the nonhomogeneous case:  does the extension result for $\ed$-domains hold with zero smoothness, and what is the corresponding nonhomogeneous BMO space? 

Since the converse direction for Sobolev extensions only holds when the dimension is $2$, one can also ask whether  there is a space for which  the existence of an extension map implies  that $\Omega$ is an $\ed$-domain in \textit{any} dimension.

We answer both these questions by identifying $\ed$-domains as the extension domains for $\bmo$, known alternatively as {\em local, localized} or {\em nonhomogeneous} BMO.  The space $\bmo(\Rn)$ was introduced by Goldberg \cite{goldberg} and consists of locally integrable functions $f$ satisfying
$$\|f\|_{\bmo(\Rn)} : = \sup_{\ell(Q) < 1} \fint_Q |f(x) - f_Q| dx  + \sup_{\ell(Q) \geq 1} |f|_Q < \infty,$$
where the supremum is taken over all cubes $Q \subset \Rn$, and $\ell(Q)$ denotes the sidelength of $Q$.
Replacing Goldberg's choice of the constant $1$ in the definition  by a finite constant $\lambda$ gives the same space with equivalent norms, and $\BMO(\Rn)$ can be considered as the case $\lambda = \infty$.   While the norm in $\bmo(\Rn)$ is not taken modulo constants, considered as sets of functions, $\bmo(\Rn)$ is a proper subset of $\BMO(\Rn)$; for example, $\log |x| \in \BMO(\Rn)\setminus\bmo(\Rn)$.  

On a bounded domain $\Omega$,  $\bmo(\Omega)$ is just $\BMO(\Omega)$ equipped with a nonhomogeneous norm, adding to \eqref{def-BMOo} the supremum of the averages of the function over large cubes or balls contained in $\Omega$, or alternatively the $L^1(\Omega)$ norm of the function, and corresponds to the zero smoothness case in the scale of nonhomogeneous Morrey-Campanato spaces (see  Triebel  \cite{triebel}, Section 1.7.2).  However, when $\Omega$ is unbounded, the choice of constant, or {\em scale}, in the definition of $\bmo(\Omega)$ is important.  

Fix $\lambda > 0$.  We say  $f \in \bmolo$ if $f$ is integrable on every cube $Q \subset \Omega$ and 
\begin{equation}
\label{def-bmolo}\|f\|_\bmolo := \sup_{Q\subset \Omega, \ell(Q) < \lambda} \fint_Q |f(x) - f_Q| dx  + \sup_{Q\subset \Omega, \ell(Q) \geq \lambda} |f|_Q < \infty.
\end{equation}
If the domain does not contain any cube of sidelength $\lambda$ or larger, we have $\bmolo = \BMOo$ with the same norms, namely we are back in the homogeneous case.  Thus we need to choose $\lambda$ sufficiently small for a cube of sidelength $\lambda$ to be contained in $\Omega$, but this is not enough.  For example, consider the Lipschitz domain $\Omega \subset \R^2$ lying between the $x$-axis and the curve $y = \max(1, 1-x)$. Clearly $\Omega$ contains arbitrarily large cubes,  but  choosing $\lambda$ too large, say $\lambda > 1$, results in the existence of functions  belonging to $\bmolo$, such as $f(x,y) = \max(x,0)$, but not having an extension to $\bmo(\R^2)$. 
Thus the correct scale for the definition of $\bmo$ on a domain is intimately connected with the geometry.  

We now state the main result of this paper.

\begin{theorem}
\label{thm_ed_is_extension}		
If $\Omega$ is an $\ed$-domain then there is a positive constant $\lambda_0$ such that for each $\lambda\leq \lambda_0$,  
		we have a bounded linear extension operator $\Tlam :\bmolo \to \bmo(\Rn)$.  Conversely, if for some $\lambda > 0$ there is a bounded extension operator $\Tlam :\bmolo \to \bmo(\Rn)$,  then $\Omega$ is an $\ed$-domain for some $\epsilon, \delta$ depending on $\lambda$.	
\end{theorem}

In particular, if $\Omega$ is an $\ed$-domain, we see that for each $\lambda\leq \lambda_0$, the set $\bmolo$ consists precisely of restrictions to $\Omega$ of elements of $\bmo(\Rn)$, hence all these sets are the same and we can define $\bmo(\Omega)$ to be $\bmolo$ with $\lambda = \lambda_0$.

Before proving the theorem, we identify $\ed$-domains with \textit{locally uniform} domains, used in the sense of Herron and Koskela \cite{herron} - see Section~\ref{sec-locallyuniform}.  The key ingredient consists of enhanced localized versions of two theorems of Gehring and Osgood \cite{gehring_osgood}, relating the existence of a certain curve, or {\em cigar}, between two points $x,y \in \Omega$, and the comparability of the {\em distance-ratio metric} $\jo(x,y)$ and the {\em quasihyperbolic metric} $\ko(x,y)$, introduced by Gehring and Palka \cite{Gehring_Palka}.  We define these terms and prove the Gehring-Osgood-type theorems  in Section~\ref{sec-uniformdomain}.   

Other variants of these conditions can be found in \cite{martel}, where the notion of {\em semi-uniform} is introduced, and in \cite{mitreas}, where the $\ed$ condition is further localized.  In \cite{LukeRogers}, the term \textit{locally uniform} is added to what is just Jones' definition of $\ed$-domains.  On the other hand,  the condition that is called {\em locally uniform}  in \cite{mitreas} is stated in terms of what are known as {\em distance cigars} (as opposed to the {\em length cigars} in Definition~\ref{defin_ed_GO}).  This condition, which we called {\em locally distance uniform}, is immediately equivalent to $\ed$ - see \cite{vaisala}, Section 2.4.   The main difficulty is replacing length cigars by distance cigars in the hypothesis of Theorem 1 of \cite{gehring_osgood} (Theorem~\ref{G-O_TH1} below).

Once the equivalence of the definitions is established, we prove, in Section~\ref{sec-necessity}, the necessity of the $\ed$ condition in Theorem~\ref{thm_ed_is_extension}.  This is stated as Theorem~\ref{thm_bmo_ext_is_uniform}, under the weaker hypothesis that there is a bounded extension operator from $\bmolo$ to $\BMO(\Rn)$.   Note that while the necessity in Jones \cite{jones1} is essentially a consequence of the fact that  $\ko \in \BMO(\Omega)$ with bounded norm, the situation is more complicated in the local case as one needs to control the nonhomogeneous norm.  This is done by comparing $\ko$ with the distance to a the set point of $\Omegal$ lying away from the boundary. 

The sufficiency  in Theorem~\ref{thm_ed_is_extension}, Theorem~\ref{thm_extension}, is proved in Section~\ref{sec-extension}. The extension to $\Omega^c$ is defined as in \cite{jones1} near the boundary, but is set to zero far away from the boundary.  In order to control this jump in the $\bmo(\Rn)$ norm, one needs a logarithmic (in sidelength) growth bound on the averages of functions in $\bmolo$ on cubes (see Proposition~\ref{prop-ave-bound}).  While such a bound is standard in $\bmo(\Rn)$, it is the locally uniform condition that gives it to us for functions in $\bmolo$, for sufficiently small $\lambda$. As pointed out above, this may fail even on a Lipschitz domain if the choice of $\lambda$ is too large.

\section{Uniform domains}
\label{sec-uniformdomain}

In what follows, $\Omega$ will always denote a domain (open and connected set) in $\Rn$.  We denote by $d_\Omega(x)$ the distance to the boundary, from either inside or outside the domain:
$$\dom(x) := \dist(x,\bOmega).$$
For a rectifiable curve $\gamma$, we denote its arclength by $s(\gamma)$ or by $|\gamma|$.  If $p, q$ are two points along the curve $\gamma$, we denote by $\gamma(p,q)$ or $\gamma_{p,q}$ the portion of the curve between $p$ and $q$.  We will use the notation $B(x,r)$ to denote a ball of center $x$ and radius $r$.  Unless otherwise stated, balls and cubes are assumed to be closed, with $Q^\circ$ denoting the interior of $Q$.

Uniform domains were defined by Martio and Sarvas in \cite{martio_sarvas} as follows
\begin{definition}
\label{defin_ed_GO}
Given $a,b\geq 1$, we say that domain $\Omega$ is $(a,b)$-uniform if any $x,y\in \Omega$ can be connected by a rectifiable curve $\gamma = \gamma(x,y)\subset \Omega$ satisfying a quasiconvexity condition with constant $a$:
\begin{equation}
	\label{defin_ed_GO_a}
	s(\gamma(x,y)) \leq a |x - y|, 
\end{equation}
and a John condition with constant $b$: for all $z\in \gamma(x,y)$
\begin{equation}
	\label{defin_ed_GO_b}
	\min(s(\gamma(x, z)),s(\gamma(z,y))) \leq  b\dom(z).
\end{equation}

We say that $\Omega$ is a uniform domain if it is $(a,b)$-uniform for some choice of $a,b\geq 1$. 
\end{definition}

\subsection{Quasihyperbolic metric and Gehring-Osgood theorems}	
On any domain $\Omega\subset \Rn$ the \textit{quasihyperbolic metric} is defined as
\begin{equation}
\label{def-ko}
\ko(x,y) := \inf_{\gamma(x,y)\subset \Omega} \int_{\gamma(x,y)} \frac{ds}{\dist(z,\bOmega)}, \qquad x,y\in \Omega,
\end{equation}
where the infimum is taken over all rectifiable curves  connecting $x$ and $y$, and $ds$ is arclength. Gehring and Osgood showed (see Lemma 1 in  \cite{gehring_osgood}) that given any two points $x, y \in \Omega$, there is always a curve minimizing $\ko$, i.e. a rectifiable curve $\gamma = \gamma(x,y)$ for which
$$\int_{\gamma} \frac{ds}{\dist(z,\bOmega)} = \ko(x,y).$$ Any such curve is called a  {\em quasihyperbolic geodesic} between $x$ and $y$.  

Gehring and Palka in \cite{Gehring_Palka} considered another metric defined on an arbitrary domain $\Omega$, which they called the  \textit{distance-ratio metric} $\jOmega$, defined by 
\begin{equation}
\label{def-jo}
\jo(x,y):= \frac{1}{2}\log\left[\left(1+\frac{|x-y|}{\dom(x)}\right)\left(1+\frac{|x-y|}{\dom(y)}\right)\right], \qquad x,y \in \Omega,
\end{equation}
and established the estimates 
\begin{equation}\label{G-P1}
\left|\log\frac{\dOmega(x)}{\dOmega(y)}  \right| \leq \kOmega(x,y),	\qquad x,y\in \Omega,
\end{equation}
and
\begin{equation}\label{G-P2}
\max \left\{ \log\left(1+ \frac{|x-y|}{\dOmega(x)}\right), \; \log\left(1+ \frac{|x-y|}{\dOmega(y)}  \right) \right\}\leq \kOmega(x,y), \qquad \qquad x,y \in \Omega.
\end{equation}
In other words, $\jo(x,y)\leq \ko(x,y)$ for any $x,y\in \Omega$.

The following two theorems imply that a reverse inequality holds if and only if $\Omega$ is uniform. 
\begin{theoremA}{A}[Gehring-Osgood] \label{G-O_TH2}
Let $x,y\in \Omega$ and suppose $\gamma(x,y)$ is a quasihyperbolic geodesic. If there exists $C\geq 1$ such that for all $z,w\in \gamma(x,y)$,
\begin{equation} \label{G-O_TH2_cond}
	\ko(z,w)\leq C[1+\jo(z,w)],
\end{equation}
then $\gamma(x,y)$ satisfies \eqref{defin_ed_GO_a} and \eqref{defin_ed_GO_b} with 
\[
a\leq 48 C^2 e^{48C^2} \quad \text{ and } \quad b\leq 24C^2.
\]
\end{theoremA}

\begin{theoremA}{B}[Gehring-Osgood] \label{G-O_TH1}
Let $x,y\in \Omega$. If there exists $\gamma(x,y)\subset \Omega$ such that \eqref{defin_ed_GO_a} and \eqref{defin_ed_GO_b} hold with some $a,b\geq 1$, then 
\[
\ko(x,y)\leq C[1+\jo(x,y)]
\]
with $C\leq 2(b+b\log a + 1)$.
\end{theoremA}

\subsection{Localized Gehring-Osgood theorems} 
\subsubsection{Refinement of Theorem \ref{G-O_TH2} }
Theorem \ref{G-O_TH2} ensures that a quasihyperbolic geodesic curve $\gamma_{x,y}$ satisfies both \eqref{defin_ed_GO_a} and \eqref{defin_ed_GO_b} if the quasihyperbolic metric is controlled by the distance-ratio metric uniformly along this geodesic. 
We will relax the hypothesis by requiring \eqref{G-O_TH2_cond} to hold only locally (i.e.\ only for $\delta$-close $z,w$). The theorem below shows that such a refined formulation is indeed possible provided that the endpoints $x,y$ themselves are close enough. This additional condition on the endpoints is necessary. It can be verified e.g. by considering an infinite strip $\Omega = \{(x_1,x_2): x_1\in \R, |x_2|<1\}$.

We will also show that it suffices to control the quasihyperbolic metric by any  expression of the form 
\[
\phi\left(\frac{|u-v|}{\dOmega(u)}\right)+\phi\left(\frac{|u-v|}{\dOmega(v)}\right),
\]
where $\phi$ is a positive increasing function growing sub-linearly. The fact that $\log$ can be replaced by any sub-linear function in the characterization of uniform domains is known - see e.g.\ \cite{vaisala2} Theorems 6.16, 6.17 or \cite{herron2}, Theorem 3.1.  

\begin{theorem} \label{thm_GO-2_our version}
Let $\delta>0$ and $c\geq 1$ be fixed. Let $\phi$ be an increasing non-negative function on $[0,\infty)$ such that there are numbers $d_\phi\geq c_\phi\geq c\geq 2$ with 
\begin{equation}
	\label{tempC2_0}
	1+\phi(t)\leq \frac{t}{2c} \qquad \text{ for all } t\geq c_\phi 
\end{equation} 
and 
\begin{equation}
	\label{tempC2_00}
	1+\phi(t)\leq \frac{t}{c_\phi e^{8 c_\phi}} \qquad \text{ for all } t\geq d_\phi. 
\end{equation}
Suppose that $\gamma_{x,y}$ is a quasihyperbolic geodesic satisfying the following properties: 
\begin{itemize}
	\item For all $u,v\in \gamma_{x,y}$ with $|u-v|<\delta$,
	\begin{equation} \label{lem1_prop1}
		\kOmega(u,v) \leq c\left [1+\frac{1}{2}\left(\phi\left(\frac{|u-v|}{\dOmega(u)}\right)+\phi\left(\frac{|u-v|}{\dOmega(v)}\right)\right)\right]. 
	\end{equation} 
	\item The distance between the endpoints $x,y$ is 
	\begin{equation} \label{lem1_prop11}
		|x-y|<\delta/(10 c_\phi).
	\end{equation} 
\end{itemize}
Then 
\begin{equation}\label{lem1_concl1}
	|\gamma_{x,y}|\leq a |x-y|	
\end{equation}
and 
\begin{equation}\label{lem1_concl2}
	\min\{|\gamma_{x,z}|,|\gamma_{y,z}| \} \leq b \dOmega(z)
\end{equation} 
for all $z\in \gamma_{x,y}$, with $a= 8d_\phi$ and $b = 2d_\phi^2$. 
\end{theorem}
\begin{remark}
In the special case $\phi(t)= \log(1+t)$, conditions \eqref{tempC2_0} and \eqref{tempC2_00} are fulfilled by $c_\phi= 6 c^2$ and $d_\phi = 48c^2 e^{16 c^2}$.
\end{remark}
\begin{proof}
Without loss of generality we may assume that $\dOmega(x)\leq \dOmega(y)$. We consider two cases: when $\dOmega(y)>2|x-y|$ and when $\dOmega(y)\leq 2|x-y|$.

\textit{Case 1: $\dOmega(y)>2|x-y|$}

In this case for any $z$ on the straight line segment $[x,y]$, we have $\dOmega(z)\geq \frac{1}{2} \dOmega(y)$. Therefore
\begin{equation} 
\label{tempC1}
	\kOmega(x,y)\leq \int_{[x,y]} \frac{ds}{\dOmega(z)} \leq \frac{2|x-y|}{\dOmega(y)} \le 1.
\end{equation}
From this, \eqref{G-P1} and the fact that $\gamma_{x,y}$ is a quasihyperbolic geodesic, for any $z\in \gamma_{x,y}$,
\[
e^{-1} \le e^{-\kOmega(z,y)} \le \frac{\dOmega(z)}{\dOmega(y)} \leq e^{\kOmega(z,y)} 
\leq e.
\]
Hence, again applying \eqref{tempC1},
\[
|\gamma_{x,y}| \leq  e \dOmega(y) \int_{\gamma} \frac{ds}{\dOmega(z)} = e \dOmega(y) \kOmega(x,y) \leq 2e|x-y|
\]
and so by the lower bound on the ratio $\frac{\dOmega(z)}{\dOmega(y)}$,  
\[
|\gamma_{z,y}| \leq |\gamma_{x,y}| \leq e \dOmega(y) \leq e^2 \dOmega(z).
\]
So, for $\dOmega(y)>2|x-y|$, we established \eqref{lem1_concl1} and \eqref{lem1_concl2} with constants $a \geq  2e$ and $b\geq e^2$. 

\textit{Case 2: $\dOmega(y)\leq 2|x-y|$}

This case is more elaborate. We put $y_0:=y$ and define a sequence $\{{y}_i\}$ as follows: let $y_i$ be the point on $\gamma_{x,y}$ such that $\dOmega(y_i) = 2\dOmega(y_{i-1})$ and $|\gamma_{{y}_i,y}|$ is minimal. We fix $M$ to be the largest index such that  $\dOmega({y}_M)\leq 2|x-y|$. Next, we put $z_0:=x$ and $\{z_i\}$ to be the sequence defined inductively: $z_i$ is the point on $\gamma_{x,y}$ such that $\dOmega(z_i) = 2 \dOmega(z_{i-1})$ and $|\gamma_{x,z_i}|$ is minimal. We fix the largest index $N$ such that $\dOmega(z_N)\leq \dOmega(y)$. Then we split the points $z_i$ into two groups: $\{{z}_i\}_{i=0}^N$ and $\{{x}_i := z_{i+N}\}_{i=0}^M$. In particular, we have
\begin{equation} \label{temp_tilde_seq}
	\dOmega(z_j) \leq \dOmega(y)\leq 2|x-y|, \qquad j=0,\dots, N,
\end{equation}
and 
\begin{equation} \label{temp_bar_seq}
	\dOmega({y}_{i-1})<\dOmega({x}_i) \leq \dOmega({y}_i)\leq 2|x-y|, \qquad i=1,\dots M.
\end{equation} 
Note that by the choice of $z_i,x_j,y_j$, the subarcs $\gamma_{z_{i-1},z_{i}}$, $i=1, \dots , N,$  $\gamma_{x_{j-1},x_{j}}$, $\gamma_{y_{j-1},y_{j}}$, $j=1, \dots , M$  and $\gamma_{x_M,y_M}$ partition the curve $\gamma_{{x},{y}}$.

Our goal is to show the following control over these sub-arcs:
\begin{equation} \label{thm7_main_est_1a}
	|\gamma_{z_{i-1},z_i}| \leq c_\phi\dOmega(z_{i-1}),
\end{equation}
\begin{equation} \label{thm7_main_est_1b}
	\dOmega(z) \geq d_\phi^{-1}\cdot \dOmega(z_{i-1}) \quad \text{ for all } z\in \gamma_{z_{i-1},z_i},
\end{equation}
\begin{equation} \label{thm7_main_est_2a}
	\begin{cases}
		|\gamma_{x_{i-1},x_i}| \leq c_\phi \dOmega(x_{i-1}), \\
		|\gamma_{y_{i-1},y_i}| \leq c_\phi \dOmega(y_{i-1}), 	
	\end{cases}
\end{equation}
\begin{equation} \label{thm7_main_est_2b}
	\begin{cases}
		\dOmega(z) \geq d_\phi^{-1}\cdot \dOmega(x_{i-1}) \quad \text{ for all } z\in \gamma_{x_{i-1},x_i}, \\
		\dOmega(z) \geq d_\phi^{-1}\cdot \dOmega(y_{i-1}) \quad \text{ for all } z\in \gamma_{y_{i-1},y_i},	
	\end{cases}
\end{equation}
\begin{equation} \label{thm7_main_est_3a}
	|\gamma_{x_{M},y_M}| \leq  d_\phi \dOmega(x_M),
\end{equation}
and
\begin{equation} \label{thm7_main_est_3b}
	\dOmega(z) \geq d_\phi^{-1}\cdot \dOmega(x_M), \quad \text{ for all } z\in \gamma_{x_M,y_M}. 
\end{equation}

The theorem will be proved once we establish \eqref{thm7_main_est_1a} - \eqref{thm7_main_est_3b}. Indeed by \eqref{thm7_main_est_1a}, \eqref{thm7_main_est_2a}, \eqref{thm7_main_est_3a} followed by \eqref{temp_bar_seq} and \eqref{temp_tilde_seq} we get 
\[
|\gamma_{x,y}|\leq \sum_{i=1}^N |\gamma_{z_{i-1},z_i}| + \sum_{i=1}^M |\gamma_{x_{i-1},x_i}| + \sum_{i=1}^M |\gamma_{y_{i-1},y_i}| + |\gamma_{x_{M},y_M}| \leq 
\]
\[
\leq c_\phi \left[ \sum_{i=1}^N  \dOmega(z_{i-1}) + \sum_{i=1}^M  \dOmega(x_{i-1}) + \sum_{i=1}^M  \dOmega(y_{i-1})\right] + d_\phi \dOmega(x_M) \leq 
\]
\[
\leq c_\phi \left[  \dOmega(z_{N}) + \dOmega(x_{M}) + \dOmega(y_{M})\right] + d_\phi \dOmega(x_M) \leq 
\]
\[
\leq 6c_\phi |x-y| + 2d_\phi 
|x-y| \leq 8d_\phi
|x-y|,
\]
which implies \eqref{lem1_concl1}. Moreover, for any $z\in \gamma_{z_{i-1},z_i}\subset \gamma_{x,x_0}$, by \eqref{thm7_main_est_1a} and \eqref{thm7_main_est_1b} we get 
\[
|\gamma_{z_0,z}| \leq |\gamma_{z_0,z_i}| = \sum_{j=1}^{i} |\gamma_{z_{j-1},z_j}| \leq 2 c_\phi \dOmega(z_{i-1}) \leq (2c_\phi d_\phi) 
\dOmega(z).
\] 
This immediately implies \eqref{lem1_concl2} for all $z\in \gamma_{x,z_N}$ with $b\geq 2c_\phi d_\phi$. In the same way, \eqref{thm7_main_est_2a} and \eqref{thm7_main_est_2b} imply \eqref{lem1_concl2} for $z\in \gamma_{x_0,x_M} \cup \gamma_{y_0,y_M}$, and \eqref{thm7_main_est_3a} and \eqref{thm7_main_est_3b} yield \eqref{lem1_concl2} for $z\in \gamma_{x_M,y_M}$ with $b\geq 2(d_\phi)^2$.

Therefore, it only remains to prove \eqref{thm7_main_est_1a} - \eqref{thm7_main_est_3b}.

\underline{Estimates \eqref{thm7_main_est_1a} and \eqref{thm7_main_est_1b}:}
We prove the estimates by induction. To verify them for $i=1$, we need to note two inequalities. The first one is  
\begin{equation} \label{tempC2_S1_1}
	\frac{|\gamma_{z_{i-1},z_{i}}|}{\dOmega(z_{i-1})} \leq 2\kOmega(z_{i-1},z), 
\end{equation}
which holds for $i=1,2,\dots, N$  because our choice of $z_i$ guarantees that $\dOmega(z)\geq 2 \dOmega(z_{i-1})$ for any $z\in \gamma_{z_{i-1},z_{i}}$. The second inequality is 
\begin{equation} \label{tempCC2}
	\kOmega(z_{0},z_{1}) \leq c \left[1+\phi\left(\frac{|z_{0}-z_{1}|}{\dOmega(z_{0})}\right)\right]. 
\end{equation}
It is true because either $|z_0-z_1|< |x-y|<\delta$ and its validity is ensured by the hypothesis \eqref{lem1_prop1}, or $|z_0-z_1|\geq |x-y|$ and by the monotonicity  of $\phi$
\[
\kOmega(z_{0},z_{1}) \leq \kOmega(x,y) \leq c \left[1+\phi\left(\frac{|x-y|}{\dOmega(z_{0})}\right)\right] \leq c \left[1+\phi\left(\frac{|z_0-z_1|}{\dOmega(z_{0})}\right)\right].
\]
Combining \eqref{tempC2_S1_1}, \eqref{tempCC2} and \eqref{tempC2_0} we get 
\[
\frac{|\gamma_{z_{0},z_{1}}|}{\dOmega(z_{0})} \leq c_\phi,
\]
which, after being plugged back into \eqref{tempCC2}, yields 
\[
\kOmega(z_{0},z_1)\leq c \left[1+\phi\left(c_\phi\right)\right] \leq c_\phi,
\]
where the second inequality follows from \eqref{tempC2_0}. 
Combining these estimates with \eqref{G-P1} we get, for $z\in \gamma_{z_{0},z_1}$,
\[
|\gamma_{z_{0},z}| \leq |\gamma_{z_{0},z_{1}}|\leq c_\phi \dOmega(z_{0})\leq c_\phi e^{c_\phi}\dOmega(z) \leq d_\phi \; \dOmega(z),
\]
where the last inequality follows from the positivity of $\phi$ and hypothesis \eqref{tempC2_00}. 
This shows both \eqref{thm7_main_est_1a} and \eqref{thm7_main_est_1b} for $i=1$. 

Assume now that \eqref{thm7_main_est_1a} and \eqref{thm7_main_est_1b} hold for $i=1,2,\dots, j-1$. In order to see that it must also hold for $i=j$, we can repeat the argument employed in the base case $i=0$ provided that we have 
\begin{equation} \label{tempCC22}
	\kOmega(z_{j-1},z_{j}) \leq c \left[1+\phi\left(\frac{|z_{j-1}-z_{j}|}{\dOmega(z_{j-1})}\right)\right]. 
\end{equation}
To show the last inequality, we invoke the induction hypothesis to get 
\[
|z_{j-1} - y| \leq |x-y| + \sum_{i=1}^{j-1} |\gamma_{z_{i-1},z_i}| \leq |x-y| + c_\phi \dOmega(z_{j-1}) 
\] 
\begin{equation}\label{tempC2_S3}
	\leq |x-y| + (c_\phi/2)\; \dOmega(y) \leq (1+c_\phi) |x-y|<\delta.
\end{equation}
Then either $|z_{j-1} - z_{j}|< |z_{j-1} - y|$, and by \eqref{tempC2_S3} we can use hypothesis \eqref{lem1_prop1} to get \eqref{tempCC22} for $i=j$, or $|z_{j-1} - z_{j}|\geq |z_{j-1} - y|$ and again due to \eqref{tempC2_S3}, we get
\[
\kOmega(z_{j-1},z_{j}) \leq \kOmega(z_{j-1},y) \leq c \left[1+\phi\left(\frac{|z_{j-1}-z_{j}|}{\dOmega(z_{j-1})}\right)\right].
\]

\underline{Estimates \eqref{thm7_main_est_2a} and \eqref{thm7_main_est_2b}:}
As the proof of these inequalities is quite similar to what we have just done above, we only outline it. Start by noting that 
\begin{equation} \label{tempC2_S2_1}
	\frac{|\gamma_{{x}_{i-1},{x}_i}|}{\dOmega({x}_{i-1})} \leq 2\kOmega({x}_{i-1},{x}_i), \qquad 	\frac{|\gamma_{{y}_{i-1},{y}_i}|}{\dOmega({y}_{i-1})} \leq 2\kOmega({y}_{i-1},{y}_i)
\end{equation}
and that estimate \eqref{thm7_main_est_1a} provides us with 
\begin{equation} \label{tempC2_S2_temp1}
	|{x}_0- y| \leq |x-y| + |x-{x}_0| \leq |x-y| + |\gamma_{z_0,z_N}|\leq (1+2c_\phi)|x-y|<\delta.
\end{equation}

Then considering the cases $|{x}_0 - {x}_1|< |{x}_0 - y|$, $|{x}_0 - {x}_1|\geq |{x}_0 - y|$, $|y_0-y_1|<|y-x_1|$ and $|y_0-y_1|\geq |y-x_1|$ we can show that inequalities 
\begin{equation}\label{tempC2_S2_2a}
	\kOmega({x}_{i-1},{x}_{i}) \leq c \left[1+\phi\left(\frac{|{x}_{i-1}-{x}_{i}|}{\dOmega({x}_{i-1})}\right)\right]
\end{equation}
and 
\begin{equation}\label{tempC2_S2_2b}
	\kOmega({y}_{i-1},{y}_{i}) \leq c \left[1+\phi\left(\frac{|{y}_{i-1}-{y}_{i}|}{\dOmega({y}_{i-1})}\right)\right]
\end{equation}
hold for $i=1$. Next, combine \eqref{tempC2_S2_1} with \eqref{tempC2_S2_2a}, \eqref{tempC2_S2_2b} and \eqref{tempC2_0} to get 
\[
\frac{|\gamma_{{x}_{0},{x}_1}|}{\dOmega({x}_{0})}, \frac{|\gamma_{{y}_{0},{y}_1}|}{\dOmega({y}_{0})}  \leq c_\phi.
\]
Plugging back into \eqref{tempC2_S2_2a} and \eqref{tempC2_S2_2b}, we get 
\[
\kOmega({x}_{0},{x}_{1}), \kOmega({y}_{0},{y}_{1}) \leq c_\phi. 
\]
This gives \eqref{thm7_main_est_2a} and \eqref{thm7_main_est_2b} for $i=1$ as the last two inequalities in combination with \eqref{G-P1} give 
\[
|\gamma_{x_{0},z}| \leq |\gamma_{x_{0},x_{1}}|\leq c_\phi \dOmega(x_{0})\leq c_\phi e^{c_\phi}\dOmega(z) \leq d_\phi \; \dOmega(z)
\]
if $z\in \gamma_{x_0,x_1}$, and 
\[
|\gamma_{y_{0},z}| \leq |\gamma_{y_{0},y_{1}}|\leq c_\phi \dOmega(x_{0})\leq c_\phi e^{c_\phi}\dOmega(z) \leq d_\phi \; \dOmega(z)
\]
if $z\in \gamma_{y_0,y_1}$.

Assuming now that \eqref{thm7_main_est_2a} and \eqref{thm7_main_est_2b} hold for $i=1,2,\dots,j-1$ we can deduce 
\[
|x_{j-1} - y_{j-1}| \leq |x-y| + |\gamma_{z_0,z_N}| + \sum_{i=1}^{j-1} |\gamma_{x_{i-1},x_i}| + \sum_{i=1}^{j-1} |\gamma_{y_{i-1},y_i}| < \delta. 
\]

This inequality allows us to consider two cases $|{x}_{j-1} - {x}_{j}|< |{x}_{j-1} - {y}_{j-1}|$ and $|{x}_{j-1} - {x}_{j}|\geq |{x}_{j-1} - {y}_{j-1}|$ and get \eqref{tempC2_S2_2a} for $i=j$. Similarly, the inequality 
\[
|{y}_{j-1} - {x}_{j}| \leq |x-y| + |\gamma_{z_0,z_N}| +  \sum_{i=1}^{j-1} |\gamma_{{x}_{i-1},{x}_i}|+\sum_{i=1}^{j-1} |\gamma_{{y}_{i-1},{y}_i}| + |\gamma_{{x}_{j-1},{x}_j}| < \delta,
\] 
allows us to consider two cases: $|{y}_{j-1} - {y}_{j}|< |{y}_{j-1} - {x}_{j}|$ and $|{y}_{j-1} - {y}_{j}|\geq |{y}_{j-1} - {x}_{j}|$, and get \eqref{tempC2_S2_2b} for $i=j$.

\underline{Estimates \eqref{thm7_main_est_3a} and \eqref{thm7_main_est_3b}:}
The proof of these estimates follows a slightly different line of reasoning. Unlike in the proofs of \eqref{thm7_main_est_1a} - \eqref{thm7_main_est_2b}, estimate 
\begin{equation}\label{tempC2_S3_0}
	\kOmega({x}_M,{y}_M) \leq c\left[1+ \phi\left(\frac{|{x}_M-{y}_M|}{\dOmega({x}_M)}\right)\right]
\end{equation}
is readily available by hypothesis \eqref{lem1_prop1} thanks to the estimates we have already obtained:
\begin{equation} \label{tempC2_S3_00}
	|{x}_M - {y}_M |\leq |x-y| + |\gamma_{x,z_N}| +  |\gamma_{x_0,x_M}| + |\gamma_{y,y_M}| < \delta.
\end{equation}
What needs to be shown, however, is the following analogue of inequalities \eqref{tempC2_S1_1} and \eqref{tempC2_S2_1}:
\begin{equation} \label{tempC2_S3_1}
	\frac{|\gamma_{{x}_M,{y}_M}|}{\dOmega({x}_M)} \leq K \kOmega({x}_M,{y}_M)
\end{equation}
for some $K\leq e^{8c_\phi}$.

First of all, note that if $\dOmega(z)<2 \dOmega({x}_M)$ for all $z\in \gamma_{{x}_M,{y}_M}$, then \eqref{tempC2_S3_1} holds with $K=2$. Otherwise, there is $z\in \gamma_{{x}_M,{y}_M}$ with $\dOmega(z)\geq 2 \dOmega({x}_M)$. By our choice of the points ${x}_M$ and ${y}_M$ this means that 
\begin{equation} \label{tempC2_S3_2}
	\dOmega({x}_M) >|x-y|/2. 
\end{equation}

Combining \eqref{tempC2_S3_00}, \eqref{tempC2_S3_0} and \eqref{tempC2_S3_2} we get
\[
\kOmega({x}_M,{y}_M) \leq c\left[1+ \phi\left(\frac{|x_M-y_M|}{\dOmega(x_M)}\right)
\right]\leq c\left[1+ \phi\left(2+6c_\phi\right)\right] \leq 2+6c_\phi.
\]
By \eqref{G-P1} this means that for any $z\in \gamma_{{x}_M,{y}_M}$, we have
\begin{equation} \label{tempC2_S3_3}
	\left| \log\left(\frac{\dOmega(z)}{\dOmega({x}_M)} \right) \right|\leq \kOmega({x}_M,{y}_M) \leq  2+6c_\phi \leq 8 c_\phi,
\end{equation}
which shows \eqref{thm7_main_est_3b}. Moreover, the last estimate yields \eqref{tempC2_S3_1} with $K=e^{8c_\phi}$:
\[
\frac{|\gamma_{{x}_M,{y}_M}|}{\dOmega({x}_M)} \leq \sup_{z\in \gamma_{{x}_M,{y}_M}} \frac{\dOmega(z)}{\dOmega({x}_M)} \cdot \int_{\gamma_{{x}_M,{y}_M}} \frac{ds(z)}{\dOmega(z)} \leq e^{8c_\phi} \kOmega(\gamma_{{x}_M,{y}_M}). 
\] 

Finally, \eqref{tempC2_S3_0} and \eqref{tempC2_S3_1} yield
\[
\frac{|\gamma_{{x}_M,{y}_M}|}{\dOmega({x}_M)} \leq c e^{8c_\phi} \left[1+ \log\left(1+\frac{|\gamma_{{x}_M,{y}_M}|}{\dOmega({x}_M)}\right)\right],
\]
which by implication \eqref{tempC2_0} results in 
\begin{equation} \label{tempC2_S3_last}
	|\gamma_{{x}_M,{y}_M}| \leq d_\phi \dOmega({x}_M) \leq 2 d_\phi |x-y|.
\end{equation}

\end{proof}

\subsubsection{Distance version of Theorem \ref{G-O_TH1}}
The strengthening of Theorem \ref{G-O_TH1} concerns the John condition \eqref{defin_ed_GO_b}. Specifically, we want to show that the same conclusion holds if we weaken hypothesis  \eqref{defin_ed_GO_b} by replacing arclength by distance.

\begin{theorem} \label{thm_GO-1_our version}
Let $x,y\in \Omega\subset \mathbb{R}^n$ and $a,b\geq 1$. 
If there exists $\gamma_{x,y}$, a curve in $\Omega$ connecting $x$ and $y$ satisfying 
\begin{equation} \label{def_unif_domain_w_dist}
	\begin{cases}
		|\gamma_{x,y}| \leq a |x-y| \\
		\min \{|x-z|, |y-z|\} \leq b \dOmega(z), \quad \forall z\in \gamma_{x,y},
	\end{cases}
\end{equation}
then 
\[
\kOmega(x,y) \leq C_{a,b,n} \left[1+\jOmega(x,y)\right]
\]
with 
\[
C_{a,b,n} \leq C_{n} a^n b^n,
\]
where $C_{n}$ is some constant that depends on $n$ only. 

\end{theorem}
\begin{proof}
Note that by the quasiconvexity hypothesis  \eqref{defin_ed_GO_a} on $\gamma = \gamma_{x,y}$,
\[
\gamma \subset B(x;a|x-y|)\cap B(y,a|x-y|)=: O_{x,y}.
\]
We claim that there exists $\mathcal{A} = \{B_i\}$, a collection of closed balls $B_i$, such that $\mathcal{A}$ is a cover of $O_{x,y}$,  
\begin{equation} \label{temp_cover_cond1}
	\diam_{\kOmega}(B_i) \leq 2 \text{ when } B_i\cap\gamma\neq \varnothing,
\end{equation}
and 
\begin{equation} \label{temp_cover_cond2}			
	\# \mathcal{A} \leq C_{a,b,n} [1+\jOmega(x,y)].
\end{equation}
Assuming this claim, the proof is not difficult. Indeed, in this case we can find distinct $Q_0,Q_1,\dots, $ $Q_{N-1} \in \mathcal{A}$ such that $Q_0\ni x$, $Q_{N-1}\ni y$ and for $i\in [0,N-2]$ we have 
\[
Q_i\cap Q_{i+1}\cap \gamma \neq \varnothing.
\]
Then because $Q_i$ form a chain and $\ko$ is a metric, 
\[
\kOmega(x,y)  = \diam_{\kOmega}\{x,y\} \leq \diam_{\kOmega}\Big(\bigcup_{i=0}^{N-1} Q_i \Big) \leq N \max_{0\leq i \leq N-1} {\diam_{\kOmega}(Q_i)}.   
\]
Using condition \eqref{temp_cover_cond1} and \eqref{temp_cover_cond2} we deduce 
\[
\kOmega(x,y) \leq  2C_{a,b,n} [1+\jOmega(x,y)].
\]

So we just need to establish the claim. Our collection $\mathcal{A}$ will contain the closed balls 
$B(x,\frac{\dOmega(x)}{2})$  and $B(x,\frac{\dOmega(y)}{2})$.
Note that for any $z\in B(x,\frac{\dOmega(x)}{2})$, we have $\dOmega(z)\geq \frac{\dOmega(x)}{2}$ and therefore, as in \eqref{tempC1},
\[
\kOmega(x,z) \leq \int_{[x,z]} \frac{ds}{\dOmega(v)} 
\leq 1.
\]
This shows that $B(x,\frac{\dOmega(x)}{2})$ satisfies \eqref{temp_cover_cond1}. By the same argument, $B(y,\frac{\dOmega(y)}{2})$ satisfies \eqref{temp_cover_cond1}.

To specify the remaining balls, we fix $\theta\in (0,\frac1{12})$ and construct covers of the following regions: $B(x,\frac{|x-y|}{2})\setminus B(x,\frac{\dOmega(x)}{2})$, $B(y,\frac{|x-y|}{2})\setminus B(y,\frac{\dOmega(y)}{2})$ and  $O_{x,y}\setminus B(x,\frac{|x-y|}{2}) \setminus  B(y,\frac{|x-y|}{2})$. 

\medskip
\underline{$\mathcal{R}$, a cover of $O_{x,y}\setminus B(x,\frac{|x-y|}{2}) \setminus  B(y,\frac{|x-y|}{2})$:}

By the doubling properties of $\Rn$, any ball of radius $r$ can be covered by at most $C_n \theta^{-n}$ closed balls of radii $\theta r$, where $C_n$ depends on $n$ only. Therefore we may claim that there is a collection $\mathcal{R}$ of at most $C_n a^n b^n\theta^{-n}$ balls of radii $\frac{\theta}{b} \frac{|x-y|}{2}$ covering $O_{x,y}\setminus B(x,\frac{|x-y|}{2}) \setminus  B(y,\frac{|x-y|}{2})$. We will always assume that no ball in $\mathcal{R}$ is a subset of $B(x,\frac{|x-y|}{2})$ nor of $B(y,\frac{|x-y|}{2})$. Hence, for any ball $B\in \mathcal{R}$ of radius $r$ and any point $z$ in it, we have 
\[
|z-x|\geq \frac{|x-y|}{2} (1-2\theta/b), \quad \text{ and } \quad  |z-y|\geq \frac{|x-y|}{2} (1-2\theta/b)
\]
or 
\begin{equation} \label{temp_decomp_3}
	\min\left(\frac{|x-z|}{r}, \frac{|y-z|}{r} \right) \geq \frac{b}{\theta}  - 2 > 10 b. 
\end{equation}
In particular, if $B$ intersects $\gamma$ we can choose $z$ to be a point in this intersection. Then for any $w\in B$,
\[
\dOmega(w) \geq \dOmega(z) - |z-w| \geq \frac{1}{b}\min(|x-z|,|y-z|) - 2r  \geq 8 r,
\]
where we use the distance John condition in the second inequality and \eqref{temp_decomp_3} for the last one. 
This means that given $B\in \mathcal{R}$ with $z\in B\cap \gamma$, we have 
\[
\kOmega(z,w) \leq \int_{[z,w]} \frac{ds}{\dOmega(v)} \leq \frac{2r}{8r} \leq \frac{1}{4} \quad \text{ for all } w\in B,
\]
or 
\[
\diam_{\kOmega}(B) \leq \frac{1}{2}. 
\]

\medskip
\underline{$\mathcal{E}$, a cover of $B(x,\frac{|x-y|}{2})\setminus B(x,\frac{\dOmega(x)}{2})$:}

The above-mentioned doubling property of $\Rn$ allows us to claim that there is a collection $\mathcal{E}$ of no more than $C_n b^n \theta^{-n} (2+\log\frac{|x-y|}{\dOmega(x)})$ closed balls covering $B(x,\frac{|x-y|}{2})\setminus B(x,\frac{\dOmega(x)}{2})$ with the following property: if $B\in \mathcal{E}$ is a ball of radius $r$, then for any $z\in B$
\begin{equation} \label{temp_decomp}
	\frac{|z-x|}{r} \geq  4 b.
\end{equation}
To see this, we write 
\[
B\Big(x,\frac{|x-y|}{2}\Big)\setminus B\Big(x,\frac{\dOmega(x)}{2}\Big) \subset  \bigcup_{j=1}^{N} B\Big(x,\frac{|x-y|}{2^j}\Big)\setminus B\Big(x,\frac{|x-y|}{2^{j+1}}\Big), 
\]
where $N$ is the smallest integer satisfying $2^N \geq \frac{|x-y|}{\dOmega(x)}$. Then we note that each annulus $ B(x,\frac{|x-y|}{2^j})\setminus B(x,\frac{|x-y|}{2^{j+1}})$ can be covered by at most $C_n b^n \theta^{-n}$ balls of radii $\frac{\theta |x-y|}{2^{j}b}$. Moreover, for any such ball $B(u,\frac{\theta |x-y|}{2^{j}b})$ and for any $z$ in it, we have 
\[
|z-x| \geq \frac{|x-y|}{2^{j+1}} - \frac{2\theta |x-y|}{2^{j}b} = \frac{|x-y|}{2^j} \left[\frac{1}{2} - \frac{2 \theta}{b}\right] = \left[\frac{b}{2\theta} - 2\right] r \geq 4b r.
\]
In particular, if $B\in \mathcal{E}$ intersects $\gamma$ at $z$ and $w\in B$ is arbitrary, then 
\[
\dOmega(w) \geq \dOmega(z) - 2 r \geq r\left(\frac{|x-z|}{b} - 2   \right) \geq 2r. 
\]
Hence 
\[
\kOmega(w,z)\leq \int_{[z,w]} \frac{ds}{\dOmega(v)}\leq 1. 
\]
Summing up, we conclude that for any $B\in \mathcal{E}$ intersecting $\gamma$ we have 
\[
\diam_{\kOmega}(B)\leq 2. 
\]

\medskip
\underline{$\mathcal{F}$, a cover of $B(y,\frac{|x-y|}{2})\setminus B(y,\frac{\dOmega(y)}{2})$}

By the same token, we may claim that there is a collection $\mathcal{F}$ of no more than $C_n b^n \theta^{-n} (2+\log\frac{|x-y|}{\dOmega(y)})$ closed balls covering $B(y,\frac{|x-y|}{2})\setminus B(y,\frac{\dOmega(y)}{2})$ with the following property: if $B\in \mathcal{F}$ intersects $\gamma$ 
then 
\[
\diam_{\kOmega}(B) \leq 2.
\]

Finally, as we verified all the balls in the collections $\mathcal{E}, \mathcal{F}$ and $\mathcal{R}$ satisfy \eqref{temp_cover_cond1} and there are no more than $C_n b^n \theta^{-n} (2+\log\frac{|x-y|}{\dOmega(x)})$, $C_n b^n \theta^{-n} (2+\log\frac{|x-y|}{\dOmega(y)})$ and  $C_n a^n b^n \theta^{-n}$ of them in each cover respectively, if we put $A$ to be the union of $\mathcal{E}, \mathcal{F}$ and $\mathcal{R}$, together with $B(x,\frac{\dOmega(x)}{2})$  and $B(x,\frac{\dOmega(y)}{2})$, then we get a collection of closed balls satisfying both  \eqref{temp_cover_cond1} and \eqref{temp_cover_cond2}. 		

\end{proof}

\subsection{Locally uniform domains}
\label{sec-locallyuniform}
One can give various definitions of locally uniform domains. We consider some of them below and use Theorem \ref{thm_GO-2_our version} and  \ref{thm_GO-1_our version} to show their equivalence. 

We start with the most natural localization of the Martio-Sarvas definition with conditions \eqref{defin_ed_GO_a} and \eqref{defin_ed_GO_b}. 
\begin{definition}[Herron-Koskela]
A domain $\Omega$ is a \textit{locally} (length)-uniform domain if there exists $a,b\geq 1$ and $\delta>0$ such that for all $x,y\in \Omega$ with $|x-y|<\delta$, there is $\gamma(x,y)\subset \Omega$ such that \eqref{defin_ed_GO_a} and \eqref{defin_ed_GO_b} hold.
\end{definition}

As a useful preliminary  definition, we also consider 
\begin{definition}
A domain $\Omega$ is a \textit{locally} \textbf{distance} uniform domain if there exists $a,b\geq 1$ and $\delta>0$ such that for all $x,y\in \Omega$ with $|x-y|<\delta$, there is $\gamma(x,y)\subset \Omega$ such that \eqref{def_unif_domain_w_dist} holds.
\end{definition}

\begin{definition}[Jones \cite{jones2}]
A domain $\Omega$ is called an $(\epsilon,\delta)$ domain if there exists $\epsilon>0$ such that for all $x,y\in \Omega$ with $|x-y|< \delta$
there is a rectifiable curve $\gamma\subset \Omega$ joining $x$ to $y$ such that 
\begin{equation}
	\label{q-convex_cond}
	s(\gamma) \leq \epsilon^{-1}|x-y|
\end{equation}
and 
\begin{equation}
	\label{John_cond}
	\dom(z) \geq \epsilon\frac{|z-x||z-y|}{|x-y|} \quad \forall z\in \gamma(x,y).
\end{equation} 
\end{definition} 
\begin{theorem} \label{thm_all_domains_equiv}
Let $\Omega\subset \Rn$ be a domain. Then the following are equivalent:
\begin{itemize}
	\item $\Omega$ is an $(\epsilon,\delta)$-domain for some $\epsilon,\delta>0$;
	\item $\Omega$ is a locally distance uniform domain;
	\item $\Omega$ is a locally uniform domain.
	
\end{itemize} 
\end{theorem} 
\begin{proof}
It is not difficult to see that the Jones' definition is equivalent to the definition of locally distance uniform domains. More precisely (see e.g.\ \cite{vaisala}), if $\Omega$ is an $(\epsilon,\delta)$ domain then $\Omega$ is locally distance uniform with the same $\delta$ and $a = 1/\epsilon$, $b=2/\epsilon$. Conversely, if $\Omega$ is locally distance uniform, then $\Omega$ is an $(\epsilon,\delta)$-domain with the same $\delta$ and $\epsilon= 1/(ab)$. 

Suppose that $\Omega$ is locally distance uniform. Applying Theorem \ref{thm_GO-1_our version} and Theorem \ref{thm_GO-2_our version} we see that $\Omega$ is locally (length) uniform with a possibly smaller $\delta'$. Finally, length uniform domains are distance uniform because arclength dominates distance.
\end{proof}

\section{bmo extension domains are locally uniform}
\label{sec-necessity}
In this section we will prove the necessity in Theorem~\ref{thm_ed_is_extension}.  We show it under a weaker hypothesis, in the form of the following theorem. 
\begin{theorem}\label{thm_bmo_ext_is_uniform}
Let  $\Omega$ be a domain and suppose that $\lambda > 0$, $C > 0$ are such that any $f\in \bmolo$ is the restriction to $\Omega$ of some $F\in \BMO(\Rn)$ with 
\[
\|F\|_{\BMO(\Rn)} \leq C \|f\|_{\bmo_\lambda(\Omega)}.
\]
Then $\Omega$ is a locally uniform domain. 
\end{theorem}

\subsection{Three preliminary lemmas}
This subsection is devoted to establishing three technical lemmas that will be used in the next subsection. 
We start with the following fact that is valid in the more general setting of doubling metric measure spaces (see Lemma 18 in \cite{VG}). 
\begin{lem}
\label{lem_k_is_BMO}
For any fixed $a\in \Omega$, the function $f(x) = \ko(a,x)$ is an element of $\BMOo$ and 
\[
\|f\|_{\BMOo} \leq c,
\]
where $c$ depends only on the dimension $n$. 
\end{lem}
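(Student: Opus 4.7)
My strategy is to reduce the estimate to a short computation using the straight-line segment between two points in $Q$, which necessarily stays inside $\Omega$ when $Q \subset \Omega$.

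Fix a cube $Q \subset \Omega$ with center $x_Q$ and sidelength $\ell = \ell(Q)$. The standard inequality $\fint_Q |f - f_Q|\,dx \leq 2 \fint_Q |f(x) - f(x_Q)|\,dx$, combined with the triangle inequality for the metric $\ko$ in the form $|f(x) - f(x_Q)| = |\ko(a,x) - \ko(a,x_Q)| \leq \ko(x, x_Q)$, reduces the task to showing that $\fint_Q \ko(x, x_Q)\,dx$ is bounded by a constant depending only on $n$.

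The geometric input is the pointwise inequality $\dom(z) \geq \dist(z, \partial Q) = \ell/2 - \|z - x_Q\|_\infty$ for $z \in Q$: indeed, since $Q \subset \Omega$ every $p \in \bOmega$ lies outside $Q$, so any segment from $z \in Q$ to $p$ must cross $\partial Q$. Using the straight segment $z_t = (1-t)x + t x_Q$ as a competitor in \eqref{def-ko} and inserting this lower bound leads, after the substitution $s = (1-t)\|x - x_Q\|_\infty$, to
\[
\ko(x, x_Q) \leq \frac{|x - x_Q|}{\|x-x_Q\|_\infty}\,\log\frac{\ell/2}{\ell/2 - \|x - x_Q\|_\infty} \leq \sqrt{n}\,\log\frac{\ell/2}{\ell/2 - \|x - x_Q\|_\infty}.
\]

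Integrating over $Q$ is now a one-variable computation in terms of the $\ell^\infty$-distance to the center. By the layer-cake formula applied with $|\{x \in Q : \|x - x_Q\|_\infty > r\}| = \ell^n - (2r)^n$ for $r < \ell/2$, followed by the substitution $u = 1 - e^{-t}$, the mean on $Q$ of the logarithm above evaluates to $\int_0^1 u^{-1}(1 - (1-u)^n)\,du = H_n$, the $n$-th harmonic number. Combining the steps yields $\|f\|_{\BMOo} \leq 2 \sqrt n\, H_n$, a constant depending only on $n$. The only points demanding a little care are the geometric inequality $\dom(z) \geq \dist(z, \partial Q)$ (settled by the crossing argument above) and the integral computation, neither of which presents a real obstacle.
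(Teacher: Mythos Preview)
Your argument is correct and takes a genuinely different route from the paper's. The paper invokes the Reimann--Rychener ``local-to-global'' result \eqref{loc-glob} to reduce to balls $B$ with $2B\subset\Omega$, on which $\dom$ is bounded below by $\diam(B)/2$ and hence the oscillation of $\ko(a,\cdot)$ is pointwise at most $2$; this makes the estimate immediate but at the cost of citing a nontrivial external theorem. You instead work directly on an arbitrary cube $Q\subset\Omega$, compare to the value at the center, and use the sharper lower bound $\dom(z)\ge \ell/2-\|z-x_Q\|_\infty$ to obtain a logarithmic pointwise bound whose mean over $Q$ you compute exactly as $H_n$. The trade-off: the paper's proof is shorter once \eqref{loc-glob} is granted, while yours is fully self-contained, yields the explicit constant $2\sqrt{n}\,H_n$, and avoids any appeal to local-to-global machinery. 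Both are valid; yours is the more elementary of the two.
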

\begin{proof}

We will use the following result, sometimes known as a {\em local-to-global} property, due to Reimann and Rychener \cite{reimann_rychener} (also attributed to Jones  - see  Theorem A1.1 and Corollary A1.1 in \cite{BrezisNirenberg}): there is a constant $C$ that depends only on the dimension $n$ such that for $f \in \Loneloc(\Omega)$, 
\begin{equation}
	\label{loc-glob}
	\|f\|_{\BMOo} \leq C \sup\limits_{2Q \subset \Omega} \fint_{Q} |f(x)  - f_{Q}| dx.
\end{equation}
Here the supremum is taken over all cubes $Q$ whose doubles $2Q$ are contained in $\Omega$ (the notation $cQ$ denotes the  concentric cube of $c$ times the sidelength).
Since
$$
\fint_{Q} |f(x)  - f_{Q}| dx \leq 
\frac{1}{|Q|^2} \int_{Q} \int_{Q} |f(x)  - f(y)| dx dy,
$$ 
it is enough to establish that 
$$
\sup\limits_{2Q \subset \Omega} \frac{1}{|Q|^2} \int_{Q} \int_{Q} |\ko(a,x)  - \ko(a,y)| dx dy \leq C'
$$
for some $C'$ depending at most on $n$. 

Suppose $x, y \in Q, 2Q \subset \Omega$.  Since $\ko$ is a distance, the triangle inequality gives
\begin{equation}
	\label{eq-osc_k}
	|\ko(a,x)  - \ko(a,y)| \leq \ko(x, y) \leq \int_{[x,y]} \frac{ds}{\dom(z)} \leq \frac{\diam(Q)}{\dist(Q,\bOmega)}, 
\end{equation}
where the integral on the right is over the line segment $[x,y]$. Since $2Q \subset \Omega$ means $\dist(Q,\bOmega) \geq \ell(Q)/2 = \diam(Q)/2\sqrt{n}$, 
we get $|\ko(a,x)  - \ko(a,y)| \leq  2\sqrt{n}$.
\end{proof}

\begin{remark}
\label{rem-osc_k}
Let $Q$ be a Whitney cube of $\Omega$ (see Section~\ref{sec_whitney_cubes}).  Estimate \eqref{eq-osc_k}, combined with property \eqref{WC2_dist_vs_size} below, gives
$$\sup_{x,y\in Q}|\ko(a,x)  - \ko(a,y)| \leq \sqrt{n}.$$
This is true (with a larger constant) if $x$ and $y$ are in adjacent Whitney cubes.  From this we can deduce Jones' observation, in \cite{jones1}, that the quasihyperbolic distance $\ko(x,y)$ is equivalent to the length of a shortest Whitney chain between the Whitney cubes containing $x$ and $y$.
\end{remark}

The following lemma provides us with control of the $\bmo$ norm by the $\BMO$ norm of the function and its $L^\infty$ norm {\em away from the boundary}.  To make this precise, we first define what  we will refer to as the {\em interior region} and the quasihyperbolic distance to this set.

\begin{definition}
Given a domain $\Omega$ and $\lambda>0$ we denote by $\Omegal$ the set $\{x\in \Omega: \dom(x)\geq \lambda/4\}$ and 
define, for $x \in \Omega$,
$$\ko(x, \Omegal) := \inf_{p \in \Omegal} \ko(x,p).$$
\end{definition}

\begin{lem}\label{lem_BMO_plus_LInf}
Let $f\in \BMOo$, $\lambda > 0$. Then 
\begin{equation}
	\label{temp_easy_1}
	\|f\|_\bmolo \lesssim  \sup_{2Q \subset \Omega} \fint_Q |f(x) - f_Q| dx  + \sup_{2Q \subset \Omega, \ell(Q) \geq \lambda/2} |f|_Q < \infty.
\end{equation} 
In particular, if $f\in L^\infty(\Omegal)$, then 
$$
\|f\|_{\bmolo} \lesssim \|f\|_{\BMOo} + \|f\|_{L^{\infty}(\Omegal)}.
$$
\end{lem}

Here and below, we use the notation $u \lesssim v$ when there exists a constant $c$ such that $u \leq cv$.  

\begin{proof}
The first term on the right-hand-side of \eqref{temp_easy_1} controls $ \|f\|_{\BMOo}$ by \eqref{loc-glob}.  To control the averages over large cubes, let
$Q \subset \Omega$ be any cube such that $\ell(Q)\geq \lambda$. We take $Q_0$ to be the cube co-centered with $Q$ with $\ell(Q_0) = \ell(Q)/2$. Then $2Q_0 \subset Q \subset \Omega$, 
and $$ 
|f|_Q \leq \fint_Q |f(x) - f_{Q_0}| + |f|_{Q_0} \leq \fint_Q \fint_{Q_0}  |f(x) - f(y)| dx dy + |f|_{Q_0} \leq 2^{n+1}  \fint_Q |f(x) - f_{Q}| + |f|_{Q_0}.$$
The integral in the first term on the right-hand-side is controlled by $\|f\|_{\BMOo}$, and therefore, applying \eqref{loc-glob} again, the right-hand-side is controlled by the right-hand-side of \eqref{temp_easy_1}.

Finally, note that if $2Q \subset \Omega$ and $\ell(Q) \geq \lambda/2$, then $\dist(Q, \bOmega) \geq \lambda/4$, i.e.\ $Q \subset \Omegal$.
\end{proof}

The third lemma says that we can always talk about quasihyperbolic geodesics from the interior region $\Omegal$ to the points in $\Omega\setminus \Omegal$.

\begin{lem} \label{lem_set_geodesic_exist}
Let $\lambda>0$. For each $x\in \Omega\setminus \Omegal$ there exists a point $x'\in \Omegal$ and a curve $\gamma(x,x')$ such that 
$$
\ko(x,\Omegal) = \ko(x,x')=  \int_{\gamma(x,x')} \frac{ds}{\dom(z)} .
$$
\end{lem}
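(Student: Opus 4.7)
\medskip

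The plan is to produce $x'$ by a direct minimizing-sequence/compactness argument, then invoke the Gehring--Osgood existence result (Lemma 1 of \cite{gehring_osgood}, already quoted in the excerpt) to realize $\ko(x,x')$ by an actual geodesic.

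First I would pick a minimizing sequence $\{x_n\}\subset \Omegal$ with $\ko(x,x_n)\to \ko(x,\Omegal)$, and argue that it is bounded in $\Rn$. For this, parametrize any connecting curve from $x$ by arclength; since $\dom$ is $1$-Lipschitz, $\dom(z(t))\le \dom(x)+t$, giving the standard estimate
$$\ko(x,y)\;\ge\; \log\!\Bigl(1+\tfrac{|x-y|}{\dom(x)}\Bigr) \qquad \text{for all } y\in\Omega.$$
With $M:=\sup_n \ko(x,x_n)<\infty$, this yields $|x-x_n|\le \dom(x)(e^{M}-1)$, so $\{x_n\}$ lies in a bounded subset of $\Rn$.

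Next I would observe that $\Omegal$ is closed in $\Rn$: the function $\dom$ extends continuously (in fact $1$-Lipschitz) to all of $\Rn$, and $\dom(y)\ge \lambda>0$ forces $y\in\Omega$, so any limit of points in $\Omegal$ lies in $\Omegal$. Therefore $\Omegal$ intersected with the closed ball given by the previous step is compact, and we may extract a subsequence $x_{n_k}\to x'\in\Omegal$. Continuity of $\ko(x,\cdot)$ on $\Omega$ follows from the triangle inequality and the fact that $\ko(y,y')\to 0$ when $y'\to y$ (use the straight segment, whose integral $\int ds/\dom$ tends to $0$); this gives $\ko(x,x')=\lim_k \ko(x,x_{n_k})=\ko(x,\Omegal)$.

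Finally, since $x$ and $x'$ are two points of $\Omega$, the Gehring--Osgood existence theorem (Lemma 1 of \cite{gehring_osgood}) produces a quasi-hyperbolic geodesic $\gamma(x,x')\subset\Omega$ with
$$\int_{\gamma(x,x')}\frac{ds}{\dom(z)}=\ko(x,x')=\ko(x,\Omegal),$$
which is exactly the required conclusion. The only delicate point is the compactness step, namely confirming that a minimizing sequence cannot drift off to infinity nor approach $\bOmega$; both are ruled out, respectively, by the logarithmic lower bound on $\ko$ and by the defining inequality $\dom\ge \lambda$ on $\Omegal$. Everything else is soft: closedness of $\Omegal$, continuity of $\ko(x,\cdot)$, and the cited geodesic-existence lemma.
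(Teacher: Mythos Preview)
Your proof is correct and follows the same overall skeleton as the paper's: establish that the infimum $\ko(x,\Omegal)$ is attained at some $x'\in\Omegal$ by a compactness-plus-continuity argument, and then invoke Lemma~1 of \cite{gehring_osgood} for the geodesic. The only real difference lies in the compactness step. You use the standard Gehring--Palka lower bound $\ko(x,y)\ge \log(1+|x-y|/\dom(x))$ to show that any minimizing sequence is bounded, which is clean and general. The paper instead fixes a reference point $y_0\in\Omegal$, sets $R=\max(\lambda\,\ko(x,y_0),|x-y_0|)$, and argues that the infimum over $\Omegal$ equals the infimum over the compact set $\Omegal\cap B_R(x)$ by analyzing, for any $y$ outside $B_R(x)$, whether the geodesic $\gamma(x,y)$ meets $\Omegal\cap B_R(x)$; if not, then $\dom<\lambda$ along $\gamma\cap B_R(x)$ and the accumulated quasi-hyperbolic length already exceeds $\ko(x,y_0)$. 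Your argument is shorter and invokes a well-known inequality (indeed one that the paper itself cites later as Lemma~2.1 of \cite{Gehring_Palka}); the paper's argument is more self-contained and exploits the specific $\lambda$-threshold in the definition of $\Omegal$. Both reach the same conclusion with no gaps.
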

\begin{proof}
Take  $x\in \Omega\setminus \Omegal$.  We will only show that $\ko(x,\Omegal) = \ko(x,x')$ for some $x'\in \Omegal$ as the existence of a geodesic $\gamma(x,x')$ is proved by Lemma 1 in \cite{gehring_osgood}. 

First of all, note that $\Omegal$ is a closed set and 
$
\ko(x,\cdot)
$
is a continuous function, so if $\Omegal$ is bounded then 
$\inf\limits_{y\in \Omegal} \ko(x,y)$
is attained in at some point in $\Omegal$.    

For unbounded $\Omegal$, fix a point $y_0$ in $\Omegal$ and set $R= \max(\lambda \, \ko(x,y_0), |x-y_0|)$. We claim that 
$$
\inf_{y\in \Omegal} \ko(x,y) = \inf_{y'\in \Omegal \cap B_R(x)} \ko(x,y'),
$$
where $B_R(x)$ is the closed ball centered at $x$ of radius $R$. As the latter infimum is attained at some point in $\Omegal \cap B_R(x)$, all we need to finish the proof is to show that
if $y\in \Omegal$ with $|x-y|>R$ then		
$$\ko(x,y) \geq  \inf_{y'\in \Omegal \cap B_R(x)} \ko(x,y').$$

Let  $\gamma(x,y)$ be a quasihyperbolic geodesic between $x$ and $y$. If there is a $z\in\gamma(x,y)$ such that $z \in \Omegal \cap B_R(x)$, then 
$$
\ko(x,y)  \geq \ko(x,z) \geq \inf_{y'\in \Omegal \cap B_R(x)} \ko(x,y'). 
$$
Otherwise, $\gamma(x,y)\cap B_R(x)\cap \Omegal = \varnothing$ and by our choice of $R$
\[
\ko(x,y)\geq \int_{\gamma \cap B_R(x)} \frac{ds}{\dom(z)} \geq \frac{s(\gamma \cap B_R(x))}{\lambda/4} \geq \frac{R}{\lambda} \geq \ko(x,y_0) \geq  \inf_{y'\in \Omegal \cap B_R(x)} \ko(x,y'). 
\] 
\end{proof}

\subsection{Proof of Theorem \ref{thm_bmo_ext_is_uniform}}
The proof is based on Lemmas \ref{lem_nec_main1} and \ref{lem_nec_main2} and Theorem \ref{thm_GO-2_our version}. Lemmas \ref{lem_nec_main1} and \ref{lem_nec_main2} below will show the existence of  $\delta_\lambda>0$ and $C'\geq 1$ such that inequality $\ko(x_1,x_2)\leq C' (1+\jo(x_1,x_2))$ holds for all $|x_1-x_2|\leq \delta_\lambda$. Thus, by Theorem \ref{thm_GO-2_our version} any quasihyperbolic geodesic $\gamma_{x,y}$ connecting points $x,y\in \Omega$ with $|x-y|<\delta_\lambda/(60C'^2)$ satisfies conditions \eqref{defin_ed_GO_a} and \eqref{defin_ed_GO_b} with some $a,b$ that depend on $C'$ only.  

\begin{lem} \label{lem_nec_main1}
Let  $\Omega$, $\lambda > 0$ and $C > 0$ be as in the hypothesis of Theorem~\ref{thm_bmo_ext_is_uniform}.
Then there exists $C'$ depending only on $C$ and $n$ such that for all $z_1,z_2\in \Omega$ and $R_1,R_2>0$ with 
\[
R_i \leq \min(\ko(z_i,\Omegal),\ko(z_1,z_2)), \quad i=1,2,
\] 
we have 
\[
R_1+R_2 \leq C'(\jo(z_1,z_2)+1). 
\]
\end{lem}
\begin{proof}
Put 
\[
f_1(x) = \max(R_1  - \ko(z_1,x), 0), \qquad f_2(x) = \max(R_2  - \ko(z_2,x), 0).
\]
Lemma \ref{lem_k_is_BMO} and the fact that truncation of a $\BMO$ function is still in $\BMO$ show that both $f_i\in \BMO(\Omega)$. Moreover, as $R_i\leq \ko(z_i,\Omegal)$, 
\[
f_i = 0 \mbox{ on } \Omegal, \ i=1,2.
\] 
Let $f=f_1 - f_2$.  Thus by Lemma \ref{lem_BMO_plus_LInf}, $f \in \bmolo$ with $\|f\|_\bmolo<c$ for some $c$ depending on $n$ only. 

Furthermore, $f_1(z_2) = f_2(z_1) = 0$ as $R_i\leq \ko(z_1,z_2)$ for both $i=1,2$. Thus $f(z_1) - f(z_2) = R_1+R_2$. Let $Q_i$ be the Whitney cubes containing $z_i$.  By Remark \ref{rem-osc_k}
\begin{equation} \label{eq-pts_to_cubes22}
	R_1 + R_2  \leq  |f_{Q_1} - f_{Q_2}| + \sup_{y\in Q_1} |f(z_1) - f(y)| + \sup_{y\in Q_2} |f(z_2) - f(y)| \leq |f_{Q_1} - f_{Q_2}| + 2 \sqrt{n}.	
\end{equation}

It remains to estimate $|f_{Q_1} - f_{Q_2}|$. As $f\in \bmolo$ and $\Omega$ is assumed to be an extension domain, we may apply Lemma 2.1 in \cite{jones1} to a $\BMO(\Rn)$ extension of $f$ and obtain 
\[
|f_{Q_1} - f_{Q_2}|\leq C'' d_2(Q_1,Q_2):= C'' \left(\left|\log\frac{l(Q_1)}{l(Q_2)}\right| + \log\left(2+ \frac{\textrm{dist}(Q_1,Q_2)}{l(Q_1)+l(Q_2)}\right) \right),
\] 
where $C''$ depends only on extension constant $C$ and $n$. 
By the properties of Whitney cubes, the right-hand-side is bounded by a constant times the following quantity  
$$\left|\log \frac{\dom(x_1)}{\dom(x_2)} \right| +  \log\left(2+\frac{|x_1 - x_2|}{\dom(x_1)+\dom(x_2)} \right).$$
Finally, since $\dom$ is Lipschitz with constant $1$, we can bound this quantity by a constant multiple of
$$\jo(x_1,x_2) =  \frac 1 2\log\left( 1+ \frac{|x_1-x_2|}{\dom(x_1)}  \right)\left( 1+ \frac{|x_1-x_2|}{\dom(x_2)} \right).$$
Combining the last three estimates with \eqref{eq-pts_to_cubes22} gives $\ko(x_1,x_2)  \leq C'\jo(x_1,x_2) + 2\sqrt{n}$.

\end{proof}
\begin{cor}
Under the assumptions of the preceding lemma, for any two points $u_1,u_2\in \Omega$ 
\begin{equation}\label{key_ext_estimate}
	\min\{\ko(u_1,u_2), \ko(u_1,\Omegal)+\ko(u_2,\Omegal) \} \leq C'(1+ \jo(u_1,u_2)),
\end{equation}
holds for some $C'$ that depends on the extension bound $C$ and dimension $n$ only. 
\end{cor}
\begin{proof}
	If  $\ko(u_1,u_2)\leq \ko(u_1,\Omegal)+\ko(u_2,\Omegal) $ we can assume that $\ko(u_1,\Omegal)+\ko(u_2,\Omegal)>0$, put 
	\[
	\theta:= \frac{\ko(u_2,\Omegal)}{\ko(u_1,\Omegal)+\ko( u_2,\Omegal) }, 
	\]
	and evoke the preceding lemma with  
	\[
	R_1 = (1-\theta)\ko(u_1,u_2), \qquad R_2= \theta\ko(u_1,u_2). 
	\]
	If $\ko(u_1,u_2)\geq \ko(u_1,\Omegal)+\ko(u_2,\Omegal) $, we evoke the lemma with $ R_i = \ko(u_i,\Omega) $. 
\end{proof}
\begin{lem} \label{lem_nec_main2}
Let domain $\Omega$, $\lambda > 0$ and $C'>0$ be as in Lemma~\ref{lem_nec_main1}. Then there is $\delta_\lambda>0$ such that for all $x_1,x_2\in \Omega$ with $|x_1-x_2|\leq \delta_\lambda$, 
\[
\ko(x_1,x_2)\leq C' (1+\jo(x_1,x_2)). 
\]  
\end{lem}
\begin{proof}
First, we note that if $\dom(x_1)\geq 2|x_1-x_2|$ or $\dom(x_2)\geq 2|x_1-x_2|$, then by the same argument as in \eqref{tempC1},
\[
\ko(x_1,x_2) 
\leq 1.
\]
Therefore we will focus on the case $\dom(x_1),\dom(x_2)< 2|x_1-x_2|$.

We will show that choosing $\delta_\lambda\in (0,\lambda/16)$ small enough so that 
\begin{equation} \label{how_small_delta_is}
	\log \frac{\lambda}{12\delta_\lambda} > C' + (C')^2(14+2\log C'),
\end{equation}
we get
\[
\ko(x_1,x_2)\leq C' (1+\jo(x_1,x_2))
\]
for any $x_1,x_2\in \Omega$ with $\dom(x_1),\dom(x_2)\leq 2|x_1-x_2|<2\delta_\lambda$.

Let $x_1,x_2\in \Omega$ be any such points.  By the assumption on $\delta_\lambda$,  $x_1,x_2\notin \Omegal$.  
Let $\gamma_1$ and $\gamma_2$ be quasihyperbolic geodesics from $\Omegal$ to $x_1$ and $x_2$, respectively, and $y_1,y_2\in \Omegal$ be the endpoints of these geodesics. Then  
\[
|x_i-y_i|\geq \dom(y_i) - \dom(x_i)=\lambda/4 - \dom(x_i)\geq \lambda/4 - 2\delta_\lambda > 2\delta_\lambda.
\]
Hence we can find points $z_1\in \gamma_1$ and $z_2\in \gamma_2$ such that $s(\gamma_i(x_i,z_i)) = \delta_\lambda$.

We will show that our choice of $\delta_\lambda$ provides us with inequality 
\begin{equation}\label{otc_z}
	\ko(z_1,z_2)<\ko(z_1,y_1)+\ko(z_2,y_2),
\end{equation}
which results in 
\[
\ko(x_1,x_2) \leq \ko(x_1,z_1) + \ko(z_1,z_2) + \ko(x_2,z_2)< 
\]
\[
\ko(x_1,z_1) + \ko(z_1,y_1)+\ko(z_2,y_2) + \ko(x_2,z_2) = \ko(x_1,y_1)+\ko(x_2,y_2) = \ko(x_1,\Omegal)+\ko(x_2,\Omegal). 
\]
The last estimate in combination with \eqref{key_ext_estimate} will prove the lemma. 

So we need to establish \eqref{otc_z}. Note that by \eqref{key_ext_estimate},
\begin{equation}\label{otc_u}
	\ko(u_1,u_2) \leq C' (1+ \jo(u_1,u_2))
\end{equation}
holds if $u_1,u_2\in \gamma_1$ or $u_1,u_2\in \gamma_2$. Then by Theorem A, 
\begin{equation*} 
	\dom(z_i)\geq \frac{\delta_\lambda}{b} \geq \frac{\delta_\lambda}{24(C')^2}.
\end{equation*}
Combining this with  $|z_1-z_2|\leq |x_1-z_1|+|x_2-z_2|+|x_1-x_2|< 3 \delta_\lambda$, we get 
\begin{equation} \label{otc_3}
	\jo(z_1,z_2) \leq \max_{i=1,2} \log \left(1+\frac{|z_1-z_2|}{\dom(z_i)} \right) \leq \log \left(1+\frac{3\cdot 24 (C')^2\delta_\lambda}{\delta_\lambda} \right) \leq 13 + 2\log(C').
\end{equation}

On the other hand, again due to \eqref{otc_u}, together with \eqref{G-P1} and \eqref{G-P2},
\[
\ko(z_i,y_i)\geq \jo(z_i,y_i)\geq \frac{1}{C'} \ko(z_i,y_i)-1\geq \frac{1}{C'}\log\frac{\lambda/4}{\dom(z_i)} -1 \geq  
\]
\[
\geq \frac{1}{C'}\log\frac{\lambda/4}{\dom(x_i) + |x_i-z_i|} -1 \geq \frac{1}{C'}\log\frac{\lambda}{12\delta_\lambda} -1. 
\]
Combining the last two estimates with \eqref{how_small_delta_is}, we get 
\[
\sum_{i = 1,2}\ko(z_i,\Omegal) = \sum_{i = 1,2}\ko(z_i,y_i) \geq 2\Big(\frac{1}{C'}\log\frac{\lambda}{12\delta_\lambda} -1\Big) \geq 2C'(14 + 2 \log C') > 2C'(\jo(z_1,z_2) + 1).
\]
Comparing the last estimate with \eqref{key_ext_estimate} applies to $z_1, z_2$, we obtain \eqref{otc_z}. 
\end{proof}

\section{Extensions of $\bmol$ functions on locally uniform domains}
\label{sec-extension}
Let $\Omega$ be an $\ed$-domain in $\Rn$ and $\Omega'$ be the interior of its complement. We will assume that neither set is empty. By $E$ and $E'$ we will denote the Whitney decompositions of $\Omega$ and $\Omega'$ respectively (see subsection \ref{sec_whitney_cubes} below for the definition). Given a mapping $Q \xrightarrow{} Q^*\in E$ defined for $Q$ in some sub-collection $\mathcal{E}' \subset E'$, we construct an extension operator 
\begin{equation} \label{ext_operator}
T_\lambda f(x) : = \begin{cases}
	f(x) & \text{ if } x\in \Omega,\\
	f_{Q^*} & \text{ if } x\in Q\in \mathcal{E}',\\
	0 & \text{ otherwise.}
\end{cases} 
\end{equation}
In this section we will prove the following theorem 
\begin{theorem} \label{thm_extension}
Let $\Omega$ be an $\ed$-domain and $\lambda\leq  \lambda_{\epsilon,\delta}:= \frac{\epsilon^2 \delta}{320 n (1+\sqrt{n}\epsilon)}$. Then there exists a mapping $Q \xrightarrow{} Q^*$ defined for all $Q\in E'$ with $l(Q)\leq \lambda$, such that the corresponding extension operator $T_\lambda$ is bounded from $\bmo_\lambda(\Omega)$ to $\bmo(\Rn)$. 
\end{theorem}

We will follow the choice of $Q\to Q^*$ introduced in the work of Jones \cite{jones2}. 

\begin{lem}[\cite{jones2} Lemma 2.4] 
	\label{lem_1_jones} 
	
	Let $\Omega$ be an $\ed$-domain. 
	If $Q\in E'$ and $\ell(Q)\leq \epsilon \delta/(16 n)$, then there exists $Q^*\in E$ such that 
	\begin{equation}
		\label{jones__first_matching_cond}
		1\leq \frac{\ell(Q^*)}{\ell(Q)} \leq 4
	\end{equation}
	and 
	\begin{equation}\label{jones__second_matching_cond}
		\dist(Q^*,Q)\leq C_{\epsilon,n} \ell(Q),
	\end{equation}
	where $C_{\epsilon,n} = 5\sqrt{n} + 8n \cdot \epsilon^{-2}$.
\end{lem}
In general, there may be several mappings satisfying \eqref{jones__first_matching_cond} and \eqref{jones__second_matching_cond}. We now fix any such correspondence $Q\to Q^*$ and will prove that the conclusions of Theorem \ref{thm_extension} hold.

\subsection{Whitney cubes of an $(\epsilon,\delta)$-domain }\label{sec_whitney_cubes}
The main goal of this section is to show that all points of an $\ed$-domain are ``close enough" to the ``large enough" Whitney cubes of the domain. The qualitative statement specifying what  ``close enough" and ``large enough" mean is the content of Proposition \ref{prop-minfat} below.

We recall that the Whitney decomposition of an open set $O\subsetneq \Rn$ is a collection of closed dyadic cubes $\{Q_I\}$ such that $O = \cup_i Q_i$ and 
\begin{equation}\label{WC1}
Q^\circ_i\cap Q_j^\circ = \varnothing \text{ if } i\neq j,
\end{equation}
\begin{equation}\label{WC2_dist_vs_size}
1\leq \frac{\dist(Q_j,\partial \Omega)}{\ell(Q_j)} \leq 4 \sqrt{n},
\end{equation}
\begin{equation}\label{WC3_adjacent_sides}
\frac{1}{4} \leq \frac{\ell(Q_j)}{\ell(Q_i)} \leq 4 \text{ if } Q_i\cap Q_j\neq \varnothing.
\end{equation}
Cubes in the Whitney decomposition of $O$ will be called {\em Whitney cubes} of $O$. Any two (closed) Whitney cubes $Q_1,Q_2$ such that 
$Q_1\cap Q_2 \neq 
\varnothing$ will be called {\em adjacent} cubes.  Depending on the context, this may include the case $Q_1 = Q_2$.  As in \cite{jones1}, we will use the term {\em Whitney chain} for a  finite sequence of distinct Whitney cubes with each cube adjacent to the preceding one.

\begin{lem}\label{lem_2}
Let $\Omega$ be an $\ed$-domain. If $Q$ is a dyadic cube in $\Rn$ with $\ell(Q)<\delta$, then there is $z\in Q^\circ$ such that 
$$
\dom(z) \geq \epsilon \ell(Q)/32. 
$$
\end{lem}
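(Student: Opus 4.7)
My plan is to split into cases based on how $\Omega$ sits inside $Q$, with the main case using the $\ed$-condition applied to two points of $\Omega$. Let $Q_*$ denote the open sub-cube concentric with $Q$ of sidelength $7\ell(Q)/8$, so that $\dist(Q_*,\partial Q) > \ell(Q)/16$.

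If $Q_* \cap \Omega = \emptyset$, then the open set $Q_*$ is disjoint from $\bar\Omega$ (a point in $Q_*\cap\bar\Omega$ would have the neighborhood $Q_*$ meeting $\Omega$), hence $Q_* \subset \Omega'$. The center $c$ of $Q$ lies in $Q_*$ and satisfies $\dom(c) \geq \ell(Q_*)/2 = 7\ell(Q)/16$, comfortably exceeding $\epsilon\ell(Q)/32$. Otherwise I pick $x \in Q_*\cap\Omega$, so $\dist(x,\partial Q) > \ell(Q)/16$, and split further.

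If there is $y \in \Omega$ with $\ell(Q)/8 \leq |x-y| < \delta$, I apply the $\ed$-condition to get a rectifiable curve $\gamma \subset \Omega$ from $x$ to $y$ satisfying \eqref{q-convex_cond} and \eqref{John_cond}. By continuity of $z \mapsto |z-x|$ along $\gamma$, I choose $z \in \gamma$ with $|z-x| = r := \min\{\ell(Q)/16,\,|x-y|/2\}$. Since $r \leq \ell(Q)/16 < \dist(x,\partial Q)$, the closed ball $\bar{B}(x,r)$ lies in $Q^\circ$, so $z \in Q^\circ$. Inserting $|z-y| \geq |x-y| - r$ into \eqref{John_cond} and checking the two subranges $|x-y| = \ell(Q)/8$ and $|x-y| > \ell(Q)/8$ both give $\dom(z) \geq \epsilon\ell(Q)/32$. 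If no such $y$ exists, then $\Omega$ has no point in $\{w : \ell(Q)/8 \leq |w-x| < \delta\}$; the sets $\Omega \cap B(x,\ell(Q)/8)$ and $\Omega \cap \{|w-x| \geq \delta\}$ are disjoint and both open in $\Omega$ (the second because any such $w$ has a neighborhood of radius less than $\delta - \ell(Q)/8$ avoiding the forbidden annulus), so connectedness of $\Omega$ together with $x \in B(x,\ell(Q)/8)$ forces $\Omega \subset B(x,\ell(Q)/8)$ and hence $\bOmega \subset \bar{B}(x,\ell(Q)/8)$. Any $z \in Q^\circ$ with $|z-x| \geq \ell(Q)/4$, easily found in a cube of side $\ell(Q)$, then satisfies $\dom(z) \geq \ell(Q)/8$.

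The main obstacle is keeping the point $z$ produced by the $\ed$-curve inside $Q^\circ$; this is precisely what dictates both the cap $r \leq \ell(Q)/16$ and the choice of $x$ in the smaller cube $Q_*$. The dyadic structure of $Q$ plays no essential role --- only the bound $\ell(Q) < \delta$ is used.
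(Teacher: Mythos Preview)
Your proof is correct. Both your argument and the paper's share the same core mechanism: apply the $\ed$ condition to a pair $x,y\in\Omega$ with $|x-y|$ comparable to $\ell(Q)$, pick $z$ on the resulting curve at a fixed small distance from $x$, and invoke \eqref{John_cond}. The difference is in how the pair is produced. The paper works with the annulus $A=\{r<|\cdot-c|<2r\}$, $r=\ell(Q)/8$: if $A\cap\Omega=\emptyset$ one is done; otherwise one takes $x\in A\cap\Omega$ and repeats the dichotomy on $A\setminus\overline{B(x,2r)}$ to either finish directly or obtain a second point $y\in A\cap\Omega$ with $|x-y|>2r$. Both points thus lie in $Q^\circ$ and connectedness of $\Omega$ is never used. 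You instead fix a single $x\in Q_*\cap\Omega$ and search for $y$ \emph{anywhere} in $\Omega$ at distance at least $\ell(Q)/8$; the fallback case (no such $y$) is dispatched via connectedness, forcing $\Omega\subset B(x,\ell(Q)/8)$. Your route avoids the annulus geometry at the price of one extra case; the paper's route avoids the connectedness argument by keeping both points inside $Q^\circ$ from the start. One minor simplification: since you assume $|x-y|\ge \ell(Q)/8$, your $r=\min\{\ell(Q)/16,|x-y|/2\}$ is always $\ell(Q)/16$, so the split into two subranges is unnecessary.
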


\begin{proof}
Let $c$ be the center of $Q$, $r=\ell(Q)/8$ and consider the open annulus 
$$A = \{x \in \Rn: r < |x - c| < 2r\} \subset Q^\circ.$$
If $A \cap \Omega = \varnothing$ , namely $A \subset \Omega'$, then $A$ contains points of $Q$ of distance at least $r/2 = \ell(Q)/16$ from $\bOmega$. Thus we may assume that there is an $x\in \Omega\cap A$. Applying the same reasoning to $A \setminus B(x, 2r)$,  which also contains an open ball of radius $r/2$, we may assume there is a $y\in \Omega\cap A$ such that 
$|x-y|>2r.$

Since $|x - y| \leq \diam(A) = 4r < \delta$, there is a curve $\gamma\subset \Omega$ connecting $x$ and $y$ and satisfying conditions  \eqref{q-convex_cond} and \eqref{John_cond}. Let $z$ be a point on this curve such that $|x-z|=r$. Then by the choice of $x, y$,
$$
|z-r|\leq |x-c|+|x-z| < 3r < \ell(Q)/2, 
$$
which means that $z\in Q^\circ$. Moreover, by $\eqref{John_cond}$ and since $|y - z| \geq |y - x| - |x - z| > r$,
$$
\dom(z) \geq \frac{\epsilon |x-z||y-z|}{|x-y|} \geq \frac{\epsilon r}{4} = \frac{\epsilon \ell(Q)}{32}.
$$	
\end{proof}

\begin{lem}
\label{lem3}
Let $\Omega$ be an $\ed$-domain.
If $Q\subset \Rn$ is a dyadic cube with $\ell(Q)<\delta$, 
then there is $Q_0\in E\cup E'$ with $Q_0\supset Q$ or $Q\supset Q_0$ and 
$$
\ell(Q_0)\ \geq \frac{\epsilon}{160 \sqrt{n}} \ell(Q). 
$$
\end{lem}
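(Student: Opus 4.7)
The plan is to apply Lemma~\ref{lem_2} to produce a point $z\in Q^\circ$ with $\dom(z)\geq\epsilon\ell(Q)/32$, identify a single Whitney cube $Q_0\in E\cup E'$ containing $z$, and then read off both the sidelength comparison and the containment directly from the Whitney property \eqref{WC2_dist_vs_size} and the nesting dichotomy for dyadic cubes.

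First I would invoke Lemma~\ref{lem_2} (using $\ell(Q)<\delta$) to obtain $z\in Q^\circ$ with $\dom(z)\geq\epsilon\ell(Q)/32>0$. In particular $z\notin\bOmega$, so $z$ lies in exactly one of the open sets $O\in\{\Omega,\Omega'\}$; let $Q_0$ be a closed Whitney cube in the decomposition of $O$ containing $z$. By \eqref{WC2_dist_vs_size} applied to $O$, $\dist(Q_0,\partial O)\leq 4\sqrt n\,\ell(Q_0)$, and since $z\in Q_0$ the triangle inequality gives
\[
\dist(z,\partial O)\leq \dist(Q_0,\partial O)+\diam(Q_0)\leq 5\sqrt n\,\ell(Q_0).
\]
When $O=\Omega$ the left side is exactly $\dom(z)$. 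When $O=\Omega'$ I would use the inclusion $\partial\Omega'\subset\bOmega$ (a boundary point of $\Omega'$ cannot lie in $\Omega$, since $\Omega$ is open and disjoint from $\Omega'$, nor in $\Omega'$, so it must lie in $\bOmega$), giving $\dom(z)=\dist(z,\bOmega)\leq \dist(z,\partial\Omega')\leq 5\sqrt n\,\ell(Q_0)$. Combining with the lower bound on $\dom(z)$ produces $\ell(Q_0)\geq \epsilon\ell(Q)/(160\sqrt n)$.

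For the containment assertion, $Q$ and $Q_0$ are both dyadic cubes with $z\in Q^\circ\cap Q_0$. If $z\in Q_0^\circ$ then certainly $Q^\circ\cap Q_0^\circ\neq\emptyset$; if instead $z\in\partial Q_0$, then a small ball around $z$ lies inside $Q^\circ$ and must meet $Q_0^\circ$, because the interior of a cube accumulates at every one of its boundary points. The standard dyadic dichotomy — two dyadic cubes are either interior-disjoint or nested — then forces $Q\supset Q_0$ or $Q_0\supset Q$.

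I do not anticipate any substantive obstacle: this is essentially a clean bookkeeping of Whitney properties anchored at the point $z$ produced by Lemma~\ref{lem_2}. The two points requiring care are the inclusion $\partial\Omega'\subset\bOmega$, which is what lets us treat the cases $z\in\Omega$ and $z\in\Omega'$ uniformly, and handling the possibility $z\in\partial Q_0$ when invoking the dyadic nesting dichotomy.
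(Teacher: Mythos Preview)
Your proposal is correct and follows essentially the same route as the paper: apply Lemma~\ref{lem_2} to obtain $z\in Q^\circ$ with $\dom(z)\geq\epsilon\ell(Q)/32$, take the Whitney cube $Q_0\in E\cup E'$ containing $z$, deduce $\ell(Q_0)\geq \dom(z)/(5\sqrt n)$ from \eqref{WC2_dist_vs_size}, and use the dyadic nesting dichotomy for containment. You are in fact slightly more careful than the paper in distinguishing $\partial\Omega'$ from $\bOmega$ and in handling the possibility $z\in\partial Q_0$; the paper simply asserts that the interiors intersect and writes $\dist(Q_0,\bOmega)$ uniformly.
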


\begin{proof}
Apply Lemma \ref{lem_2} to the given cube $Q$ to get $z\in Q^\circ$ such that $\dom(z) \geq  \epsilon \ell(Q)/32$. Since $z \not\in \bOmega$, there is a cube $Q_0$  in $E\cup E'$ containing $z$. As $Q$ and $Q_0$ are dyadic cubes with intersecting interiors, either $Q_0\supset Q$ or $Q\supset Q_0$. There is nothing to prove in the former case.  

In the latter case, note that \eqref{WC2_dist_vs_size} implies 
$$
1 \leq \frac{\dom(z)}{\ell(Q_0)} \leq \frac{\dist(Q_0,\partial \Omega)+\diam(Q_0)}{\ell(Q_0)}\leq 5\sqrt{n}
$$
and therefore
$$
\ell(Q_0)  \geq \frac{\dom(z)}{5 \sqrt{n}}  \geq \frac{\epsilon  \ell(Q)}{160 \sqrt{n}}.
$$
\end{proof}

We can apply the lemma above to obtain the following result, which is reminiscent of the notion of {\em plumpness} in \cite{vaisala}.

\begin{prop}
\label{prop-minfat}
If $\Omega$ is an $\ed$-domain and $x \in \Omega$, then the Whitney decomposition $E$  contains a cube $S$ of sidelength at least $\epsilon \delta/(320 n)$, and whose distance from $x$ is less than  $\delta (\epsilon^{-1} + \sqrt{n})$.
\end{prop}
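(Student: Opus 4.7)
The plan is to apply Lemma \ref{lem_2} to a dyadic cube of sidelength comparable to $\delta$ containing $x$, and then extract the desired Whitney cube of $E$ through the deep-interior point produced by the lemma. Specifically, I would pick the unique integer $k$ with $\delta/2 \le 2^k < \delta$ and let $Q$ denote the dyadic cube of sidelength $2^k$ containing $x$. The point to stress here is that Lemma \ref{lem_2} imposes no requirement that $Q\subset\Omega$; it is stated for arbitrary dyadic cubes in $\Rn$ with $\ell(Q)<\delta$. This matters because for $x$ close to $\bOmega$ no dyadic cube of size comparable to $\delta$ containing $x$ need fit inside $\Omega$.

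Lemma \ref{lem_2} then produces $z\in Q^\circ$ with $\dom(z) \ge \epsilon\ell(Q)/32 \ge \epsilon\delta/64 > 0$. In particular $z\in\Omega$, so $z$ belongs to some Whitney cube $S\in E$. The Whitney size comparison yields
$$
\dom(z) \le \dist(S,\bOmega)+\diam(S) \le 4\sqrt{n}\,\ell(S) + \sqrt{n}\,\ell(S) = 5\sqrt{n}\,\ell(S)
$$
from \eqref{WC2_dist_vs_size} and $\diam(S)=\sqrt{n}\,\ell(S)$, so
$$
\ell(S) \ge \frac{\dom(z)}{5\sqrt{n}} \ge \frac{\epsilon\delta}{320\sqrt{n}} \ge \frac{\epsilon\delta}{320\,n},
$$
using $\sqrt{n}\le n$. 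Since both $x$ and $z$ lie in $Q$,
$$
\dist(x,S) \le |x-z| \le \diam(Q) = \sqrt{n}\,\ell(Q) < \sqrt{n}\,\delta < \delta(\epsilon^{-1}+\sqrt{n}),
$$
which gives the stated distance bound.

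There is no real obstacle in this argument; all the substantive geometric content has been packaged into Lemma \ref{lem_2}, and from there only standard Whitney bookkeeping is needed. The distance bound $\delta(\epsilon^{-1}+\sqrt{n})$ asserted in the proposition is in fact not tight from the above approach, since we obtain the strictly smaller $\sqrt{n}\,\delta$; the extra $\epsilon^{-1}\delta$ slack in the statement perhaps anticipates an alternative proof that invokes the $\ed$-curve definition directly, or one that first produces a cube in $E\cup E'$ via Lemma \ref{lem3} and then passes to $E$ through a matching cube via Lemma \ref{lem_1_jones}.
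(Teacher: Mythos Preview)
Your argument has a genuine gap at the sentence ``In particular $z\in\Omega$, so $z$ belongs to some Whitney cube $S\in E$.'' Recall that in this paper $\dom(z)=\dist(z,\bOmega)$ is defined for all $z\in\Rn$, from either side of the boundary, and Lemma~\ref{lem_2} only asserts $\dom(z)\ge \epsilon\ell(Q)/32$; it does \emph{not} assert $z\in\Omega$. If you look at the proof of Lemma~\ref{lem_2}, the very first case is that the annulus $A$ misses $\Omega$ entirely, in which case the deep point $z$ produced lies in $\Omega'$, not in $\Omega$. Having $x\in Q\cap\Omega$ does not rule this out, since $x$ need not lie in the annulus $A$. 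In that situation the Whitney cube containing $z$ is in $E'$, not in $E$, and your argument stops.

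This is precisely why the paper takes the route you guessed at in your closing remark: it first invokes Lemma~\ref{lem3} to obtain a cube $Q_0\in E\cup E'$ of the right size, and if $Q_0\in E'$ it passes to a matching cube $Q_0^*\in E$ via Lemma~\ref{lem_1_jones}. The matching step costs an extra $C_{\epsilon,n}\ell(Q_0)$ in distance (see \eqref{jones__second_matching_cond}), and that is exactly the source of the $\epsilon^{-1}\delta$ term in the bound $\delta(\epsilon^{-1}+\sqrt{n})$. So the slack you noticed is not incidental --- it is doing real work in the case your argument fails to cover.
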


\begin{proof}
Let $x \in \Omega$ and take any dyadic cube $Q$ containing $x$ whose sidelength satisfies $\delta/2 \leq \ell(Q) < \delta$.  
By Lemma \ref{lem3}, there is a cube $Q_0\in E\cup E'$ such that $Q_0\supset Q$, or $Q\supset Q_0$ and $\ell(Q_0)\geq \epsilon\ell(Q)/(160 \sqrt{n})$. 
In the first case we must have $Q_0 \in E$ and $\dist(Q_0, x) = 0$, so we let $S = Q_0$.  

In the second case, it is possible that $Q_0 \in E'$, in which case, provided $\ell(Q_0) \leq \epsilon \delta/(16n)$, can apply Lemma~\ref{lem_1_jones} to $Q_0$ to get a matching cube $Q_0^* \in E$ with $\ell(Q_0^*) = \ell(Q_0)$, and let $S =  Q_0^*$.  If our original choice of $Q_0$ happens to be too large, going along the straight line from $Q_0 \subset \Omega'$ to $x \in \Omega$ which lies in $Q$, we obtain a chain of Whitney cubes in $E'$ whose sidelengths tend to zero as they approach $\bOmega$. By \eqref{WC3_adjacent_sides}, the sidelengths of consecutive cubes in the chain vary by a factor of at most $4$,  so we can replace our choice  by another cube $Q_0$ in $E'$ with sidelength in  $(\epsilon \delta/(64 n), \epsilon \delta/(16n))$, and take its matching cube as our desired cube $S$ in $E$.  Our $S$ will have $\ell(S) \geq \min(\epsilon\ell(Q)/(160 \sqrt{n}),\epsilon \delta/(64 n)) \geq \epsilon \delta/(320 n)$, and since our chosen cube $Q_0$ in $E'$ intersects our original cube $Q$, by \eqref{jones__second_matching_cond} we will have that 
$$\dist(S, x) \leq \dist(S, Q_0) + \diam(Q_0) + \dist(Q_0, x) \leq  (C_{\epsilon,n} + \sqrt{n})\ell(Q_0) + \diam(Q) < \delta (\epsilon^{-1} + \sqrt{n}).$$
\end{proof}

\subsection{Averages of $\BMO$ and $\bmo_\lambda$ functions over Whitney cubes}\label{sec_bmo_averages}
Let us start by stating the following special case of Lemma 2.2 in \cite{jones1}:
\begin{lem} 
\label{lem5_Jones}
Let $\Omega$ be any domain and $Q_1,Q_2\in E$ be two adjacent Whitney cubes of $\Omega$. Then  
\begin{equation*}
	|f_{Q_1} - f_{Q_2}| \lesssim \|f\|_{\BMOo},
\end{equation*}
for all $f\in\BMOo$.
\end{lem}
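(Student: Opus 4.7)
Write $\ell_i := \ell(Q_i)$ and, without loss of generality, $\ell_1 \ge \ell_2$; by \eqref{WC3_adjacent_sides} one then has $\ell_1/4 \le \ell_2 \le \ell_1$. Let $c_i$ denote the center of $Q_i$ and fix a common point $c \in Q_1 \cap Q_2$. My plan is to compare $f_{Q_1}$ and $f_{Q_2}$ through a uniformly bounded chain of averages of $f$ taken on small concentric cubes strung along the polyline $c_1 \to c \to c_2$. By convexity of each $Q_i$ this polyline lies in $Q_1 \cup Q_2$, and for any point $p$ on it one has $\dom(p) \ge \min(\ell_1,\ell_2) = \ell_2$.

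Set $s := \ell_2/(4\sqrt{n})$. For any $p$ on the polyline, the concentric cube $R$ of sidelength $s$ centered at $p$ and its concentric doubling of sidelength $2s$ both lie in $\Omega$, since the doubled cube fits inside the ball $B(p,s\sqrt{n}) = B(p,\ell_2/4)$ while $\dom(p) \ge \ell_2$. By the definition of $\BMOo$ one therefore has $\fint_R |f - f_R| \le \|f\|_{\BMOo}$ for any such $R$. Choose points $p_0 = c_1, p_1, \ldots, p_N = c_2$ along the polyline with $|p_{j-1}-p_j| \le s/2$; the polyline has length at most $(\ell_1+\ell_2)\sqrt{n}/2$ and $\ell_1 \le 4\ell_2$, so $N$ is bounded by a constant depending only on $n$. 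Writing $R_j$ for the sidelength-$s$ cube centered at $p_j$, the $s/2$ spacing guarantees that $R_{j-1}$ and $R_j$ both sit inside the concentric doubling of $R_{j-1}$, which has comparable measure and lies in $\Omega$; the standard doubling-type averaging estimate $|f_A - f_B| \le (|B|/|A|)\fint_B |f - f_B|$ (applied with $A = R_{j-1}$ or $R_j$ and $B$ the doubling of $R_{j-1}$) then yields $|f_{R_{j-1}} - f_{R_j}| \lesssim \|f\|_{\BMOo}$. Summing the chain gives $|f_{R_0} - f_{R_N}| \lesssim \|f\|_{\BMOo}$; since $R_0 \subset Q_1$ and $R_N \subset Q_2$ with volume ratios $(\ell_i/s)^n$ bounded by a dimensional constant, the same trick handles $|f_{Q_1}-f_{R_0}|$ and $|f_{Q_2}-f_{R_N}|$, and the triangle inequality completes the proof.

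The main subtlety, and the reason one cannot simply enclose $Q_1\cup Q_2$ in a single comparable-size cube contained in $\Omega$, is that a Whitney cube may sit as close to $\bOmega$ as its own sidelength, so no such global enclosing cube is guaranteed to exist. Using a bounded chain of small cubes that each keep distance $\ge \ell_2$ from $\bOmega$ (by remaining inside $Q_1 \cup Q_2$) is precisely what circumvents this obstruction.
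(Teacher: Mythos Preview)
Your argument is correct. The paper does not supply its own proof of this lemma; it is quoted as a special case of Lemma~2.2 in Jones~\cite{jones1} and left without demonstration. Your chaining argument along the polyline $c_1\to c\to c_2$ using small cubes of sidelength $\sim\ell_2$ kept inside $Q_1\cup Q_2$ is a clean, self-contained realization of the standard idea behind Jones' result, and all the constants you track (polyline length, number of steps, volume ratios) are indeed dimensional.
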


Our goal in this subsection is to give a good bound on $|f_{Q_1} - f_{Q_2}|$ for $f\in\BMOo$ and non-adjacent Whitney cubes $Q_1,Q_2$, using the $\ed$ condition of $\Omega$. This is done in Corollary \ref{lem6.5}.  Moreover, if $f\in \bmolo$, we will show that the averages $|f_{Q}|$ themselves grow at most logarithmically as the cubes shrink. While it is not difficult to see this working on $\Rn$, or on a metric measure spaces with doubling (see e.g. \cite{dafni_yue}), establishing the result in a domain requires the interplay between the geometry of the domain and the scale $\lambda$ (see Proposition \ref{prop-ave-bound}).

\begin{lem}
\label{lem6}
Let $\Omega$ be any domain and $x,y\in \Omega$ be connected by a rectifiable curve $\gamma$. Let $\{Q_i\}_{i=1}^m\subset E$ be the Whitney cubes covering $\gamma$. Then 
$$
m\lesssim \int_{\gamma} \frac{ds}{\dom(z)} +1. 
$$
In particular, if $\Omega$ is an $\ed$-domain, then for $x, y \in \Omega$ with $|x-y|< \delta$,  there exists a Whitney chain $\{Q_i\}_{i=1}^m$ such that $x\in Q_1$, $y\in Q_m$ and 
\begin{equation}
	\label{eq-GO}
	m \leq C_\epsilon (1+\jo(x,y)).
\end{equation}
\end{lem}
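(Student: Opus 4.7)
The first assertion is the heart of the lemma, and I would prove it by parameterizing $\gamma$ by arc length, $\gamma : [0, L] \to \Omega$ where $L = s(\gamma)$, and exploiting the fact that $\dom$ is $1$-Lipschitz, so that $t \mapsto \dom(\gamma(t))$ has Lipschitz constant at most $1$. Define a partition inductively by $t_0 = 0$ and $t_{k+1} = \min(L,\, t_k + \dom(\gamma(t_k))/2)$, yielding a finite sequence $0 = t_0 < t_1 < \ldots < t_N = L$. On each subinterval $[t_k, t_{k+1}]$, the Lipschitz property forces $\dom(\gamma(t)) \in [\dom(\gamma(t_k))/2,\, 3\dom(\gamma(t_k))/2]$ together with $|\gamma(t) - \gamma(t_k)| \leq \dom(\gamma(t_k))/2$. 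By \eqref{WC2_dist_vs_size}, any Whitney cube $Q \in E$ meeting $\gamma([t_k, t_{k+1}])$ must therefore have sidelength comparable to $\dom(\gamma(t_k))$ and must lie within a ball around $\gamma(t_k)$ of radius of order $\dom(\gamma(t_k))$; a dimensional volume count (the Whitney cubes are pairwise non-overlapping and share a common scale) bounds the number of such cubes by a constant $c_n$ independent of $k$.

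Combining, $m \leq c_n N$. Conversely, on each complete subinterval one has $1/\dom(\gamma(s)) \geq 2/(3\dom(\gamma(t_k)))$ over a length $\dom(\gamma(t_k))/2$, so the subinterval contributes at least $1/3$ to $\int_\gamma ds/\dom$; summing gives $N - 1 \leq 3\int_\gamma ds/\dom$, whence $m \lesssim \int_\gamma ds/\dom + 1$. For the second assertion, weak local quasi-hyperbolic uniformity yields $\ko(x,y) \leq c\jo(x,y) + d$ whenever $|x - y| < \delta$, and a quasi-hyperbolic geodesic $\gamma$ from $x$ to $y$, realizing $\int_\gamma ds/\dom = \ko(x,y)$, exists by Lemma 1 of \cite{gehring_osgood}. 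Applying the first assertion, $\gamma$ is covered by at most $N \lesssim \jo(x,y) + 1$ distinct Whitney cubes. Since $\gamma$ is a connected path contained in the union of these cubes, they form a connected set under the Whitney adjacency relation, so extracting a shortest chain within this set from a cube containing $x$ to one containing $y$ produces a sequence of pairwise adjacent Whitney cubes of length at most $N$, as required.

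The principal obstacle will be the partition step: one must carefully balance the $1$-Lipschitz propagation of $\dom$ along $\gamma$ against the Whitney size estimate \eqref{WC2_dist_vs_size} in order to simultaneously control the cube count per subinterval, via a volume count at a single scale, and the number of subintervals, in terms of $\int_\gamma ds/\dom$. Once the geodesic is in hand and the first assertion is established, the chain extraction for the second assertion is purely combinatorial, relying only on connectedness of the collection of Whitney cubes intersecting $\gamma$.
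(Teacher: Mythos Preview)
Your argument is correct and takes a genuinely different route from the paper's. The paper works directly with the Whitney cubes: it observes that $\int_\gamma ds/\dom$ dominates, up to a constant, the count of those cubes $Q_i$ with $s(\gamma \cap Q_i) \geq \alpha\,\ell(Q_i)$ (call these ``substantial''), and then argues via the Whitney neighborhood $\cN(Q_i)$ that any cube meeting $\gamma$ either borders an endpoint or has a substantial cube among its neighbors, so $m$ is at most a dimensional constant times the number of substantial cubes plus $2$. Your approach instead partitions the arc-length interval $[0,L]$ into pieces on each of which $\dom\circ\gamma$ stays within a fixed ratio of its initial value, uses a single-scale packing count to bound the number of Whitney cubes meeting each piece by $c_n$, and then bounds the number $N$ of pieces by $3\int_\gamma ds/\dom + 1$. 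Both methods are standard and yield the same conclusion; yours pushes all the combinatorics into one volume estimate at the cost of introducing an arc-length parameterization, while the paper's stays entirely inside the Whitney decomposition and never parameterizes $\gamma$. For the second assertion your explicit chain-extraction step (shortest Whitney chain inside the connected collection covering the geodesic) spells out what the paper leaves implicit.
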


\begin{proof}
Fixing $\alpha \in (0,1)$, we have 
$$\int_{\gamma} \frac{ds}{\dom(z)} \geq \sum_i \frac{s(\gamma\cap Q_i)}{\dist(Q_i,\bOmega)} \geq  \sum_{i} \frac{s(\gamma\cap Q_i)}{\ell(Q_i)4\sqrt{n}}\gtrsim \; \#\{i: s(\gamma\cap Q_i) \geq \alpha \ell(Q_i)\}.$$
Let $\cA$ be the collection of those cubes $Q_i$ with $s(\gamma\cap Q_i) \geq \alpha \ell(Q_i)$.
For each $Q_i$, either one of the endpoints of $\gamma$ lies in an adjacent cube or $\gamma$ exits the set $\cN(Q_i)$ consisting of $Q_i$ and all its neighboring cubes, meaning that 
$$s(\gamma \cap \cN(Q_i)) \geq \dist(Q_i, \partial\cN(Q_i)) \geq \ell(Q_i)/4.$$
As the number of  cubes in $\cN(Q_i)$ is bounded by some $D_n$, and the size of the cubes in $\cN(Q_i)$ is bounded by $4\ell(Q_i)$, if all those cubes do not belong to $\cA$ then 
$$\ell(Q_i)/4 < 4D_n \alpha \ell(Q_i),$$
which is impossible if we choose $\alpha \leq 1/(16D_n)$.  Noting that there can be at most $2D_n$ cubes $Q_i$ for which $x$ or $y$ lie in an adjacent cube, we get that
$$m \leq D_n (\#\cA + 2)	\lesssim 	\int_{\gamma} \frac{ds}{\dom(z)} +1.$$
This proves the first part of the theorem. 

For the second part, we apply the above to the quasihyperbolic geodesic $\gamma'(x,y)$ and use Theorem \ref{thm_all_domains_equiv} with Theorem \ref{thm_GO-1_our version}.
\end{proof}

As a consequence of this lemma we obtain a local version of Lemma 2.1 in \cite{jones1}.
\begin{cor}
\label{lem6.5}
Assume $\Omega$ is an $\ed$-domain and let $Q_1,Q_2\in E$ be such that 
$\ell(Q_1) \leq \ell(Q_2)$. If $\dist(Q_1,Q_2)<\delta$, then 
$$
|f_{Q_1}-f_{Q_2}| \lesssim  \|f\|_{\BMO(\Omega)} \left(1 + \log_+\left(\frac{\dist(Q_1,Q_2) + \ell(Q_2)}{\ell(Q_1)}\right)\right)
$$
with a constant depending on $\epsilon$.  Here $\log_+(x) := \max(\log x, 0)$.
\end{cor}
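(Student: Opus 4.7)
My plan is to reduce the estimate to a telescoping sum of differences of averages over adjacent Whitney cubes along a chain joining $Q_1$ to $Q_2$, whose length is controlled via Lemma~\ref{lem6}. First, choose $x \in Q_1$ and $y \in Q_2$ to be a pair of closest points, so that $|x-y| = \dist(Q_1, Q_2) < \delta$. The second part of Lemma~\ref{lem6} (which uses the weakly locally quasi-hyperbolically uniform hypothesis) then produces a sequence of pairwise adjacent Whitney cubes $R_1, \dots, R_m \in E$ with $x \in R_1$, $y \in R_m$, and $m \lesssim \jo(x,y) + 1$. Since $x \in Q_1 \cap R_1$ and $y \in Q_2 \cap R_m$, the cubes $Q_1$ and $R_1$ are adjacent (or equal), and likewise for $Q_2$ and $R_m$; prepending $Q_1$ and appending $Q_2$ yields a Whitney chain from $Q_1$ to $Q_2$ of length at most $m+2$.

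Next, I would apply Lemma~\ref{lem5_Jones} to each consecutive pair in this enlarged chain. Telescoping gives
$$
|f_{Q_1} - f_{Q_2}| \leq |f_{Q_1} - f_{R_1}| + \sum_{i=1}^{m-1} |f_{R_i} - f_{R_{i+1}}| + |f_{R_m} - f_{Q_2}| \lesssim (m+2)\|f\|_\BMOo \lesssim (\jo(x,y)+1)\|f\|_\BMOo.
$$
It remains to estimate $\jo(x,y)$ in terms of $\ell(Q_1), \ell(Q_2)$ and $\dist(Q_1, Q_2)$.

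For this last step, I would invoke the Whitney property \eqref{WC2_dist_vs_size}: since $x \in Q_1$ and $y \in Q_2$, we have $\dom(x) \geq \dist(Q_1, \bOmega) \geq \ell(Q_1)$ and $\dom(y) \geq \ell(Q_2) \geq \ell(Q_1)$. Substituting into the definition \eqref{def-jo} of $\jo$, together with $|x-y| = \dist(Q_1, Q_2)$, gives
$$
\jo(x,y) \leq \log\left(1 + \frac{\dist(Q_1,Q_2)}{\ell(Q_1)}\right) \leq \log 2 + \log_+\left(\frac{\dist(Q_1,Q_2) + \ell(Q_2)}{\ell(Q_1)}\right),
$$
where the second inequality uses $\ell(Q_1) \leq \ell(Q_2)$ to bound $\ell(Q_1) + \dist(Q_1,Q_2) \leq 2(\ell(Q_2) + \dist(Q_1,Q_2))$, and the observation that the ratio inside is at least $1$ so $\log_+$ coincides with $\log$ there. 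Combining with the telescoping estimate gives the claimed bound. There is no serious obstacle beyond bookkeeping; the only subtle point is insisting on closest points in order to keep $|x-y| < \delta$ (even when $\ell(Q_2)$ happens to be large), so that the second part of Lemma~\ref{lem6} actually applies.
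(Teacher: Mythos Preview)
Your argument is correct and follows essentially the same route as the paper's own proof: pick $x\in Q_1$, $y\in Q_2$ with $|x-y|<\delta$, invoke the second part of Lemma~\ref{lem6} to get a Whitney chain of length $m\lesssim \jo(x,y)+1$, telescope using Lemma~\ref{lem5_Jones}, and bound $\jo(x,y)$ via the Whitney estimates $\dom(x)\geq \ell(Q_1)$, $\dom(y)\geq \ell(Q_2)$. Your choice of closest points (so that $|x-y|=\dist(Q_1,Q_2)$) and the explicit prepending/appending of $Q_1,Q_2$ to the chain are just a slightly more careful version of what the paper does tersely.
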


\begin{proof}
We apply Lemma~\ref{lem6} to points $x \in Q_1$, $y \in Q_2$ with $|x - y| < \delta$ to get a curve between $x$ and $y$ for which we can use estimate \eqref{eq-GO}  to bound the length $m$ of the  chain of Whitney cubes along this curve by $C_\epsilon(\jo(x, y) + 1)$.  Note that $|x - y| \leq  \dist(Q_1,Q_2) + \diam(Q_1) + \diam(Q_2)$, so  the hypothesis and the properties of Whitney cubes, we have
$$\jo(x, y) = \frac 1 2 \log\left[\left(1+\frac{|x-y|}{\dist(x,\bOmega)}\right)\left(1+\frac{|x-y|}{\dist(y,\bOmega)}\right)\right] \lesssim 1 + \log_+\left(\frac{\dist(Q_1,Q_2)+ \ell(Q_2)}{\ell(Q_1)}\right),$$
The conclusion follows by applying Lemma~\ref{lem5_Jones} to adjacent cubes along this chain.	
\end{proof}

We now come to the desired logarithmic growth estimate on the averages of $\bmolo$ functions on Whitney cubes, which will prove very useful in what follows.
\begin{prop}
\label{prop-ave-bound}
Let $\Omega$ be an $\ed$-domain.   If $\lambda\leq \frac{\epsilon^2 \delta}{320 n(1 + \sqrt{n} \epsilon)}$, then for any $f\in \bmolo$ and $Q\in E$,
$$
|f_Q| \lesssim \left(1 + \log_+\left(\frac{\lambda}{\ell(Q)}\right)\right) \|f\|_{\bmolo}.
$$
\end{prop}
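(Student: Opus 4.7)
When $\ell(Q)\ge\lambda$ the bound is immediate, since $|f_Q|\le|f|_Q\le\|f\|_\bmolo$ directly from the definition of $\bmolo$ while $\log_+(\lambda/\ell(Q))=0$. The substance is in the case $\ell(Q)<\lambda$, where the plan is to telescope along a chain of Whitney cubes $Q=T_0,T_1,\dots,T_N\in E$ of geometrically growing sidelengths ending at some $T_N$ with $\ell(T_N)\ge\lambda$, applying Corollary~\ref{lem6.5} at each link with an $O(1)$ log-factor.

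Concretely, let $R_k$ be the unique dyadic cube containing $Q$ with $\ell(R_k)=2^k\ell(Q)$, and let $N$ be the smallest integer with $2^N\ell(Q)\ge 160\sqrt n\,\lambda/\epsilon$, so that $N\lesssim 1+\log_+(\lambda/\ell(Q))$. The stated upper bound on $\lambda$ implies in particular $\lambda\le\epsilon\delta/(320n)$ and hence $\sqrt n\,2^N\ell(Q)<\delta$, so every $R_k$ with $k\le N$ satisfies $\ell(R_k)<\delta$. Apply Lemma~\ref{lem_2} to each such $R_k$ to produce $z_k\in R_k^\circ$ with $\dom(z_k)\ge\epsilon\ell(R_k)/32$, and let $T_k\in E$ be the Whitney cube containing $z_k$. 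The inequality $\dom(z_k)\le 5\sqrt n\,\ell(T_k)$ coming from \eqref{WC2_dist_vs_size} then yields $\ell(T_k)\ge\epsilon\,2^k\ell(Q)/(160\sqrt n)$. Since $T_k$ and $R_k$ are dyadic cubes with overlapping interiors, either $T_k\subseteq R_k$ or $T_k\supsetneq R_k\supset Q$; the latter would force the Whitney cube $T_k$ to strictly contain the Whitney cube $Q$, violating the disjoint-interior property. Thus $T_k\subseteq R_k$; in particular $T_0=Q$, and $\ell(T_N)\ge\lambda$ by the choice of $N$, so $|f_{T_N}|\le|f|_{T_N}\le\|f\|_\bmolo$.

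For the telescoping, $T_k,T_{k+1}\subset R_{k+1}$ gives $\dist(T_k,T_{k+1})\le\sqrt n\,2^{k+1}\ell(Q)<\delta$, and the ratio
$$(\dist(T_k,T_{k+1})+\max(\ell(T_k),\ell(T_{k+1})))/\min(\ell(T_k),\ell(T_{k+1}))$$
entering Corollary~\ref{lem6.5} is bounded by a constant depending only on $n$ and $\epsilon$ (numerator $\lesssim 2^{k+1}\ell(Q)$, denominator $\gtrsim \epsilon\,2^k\ell(Q)/\sqrt n$). Hence $|f_{T_k}-f_{T_{k+1}}|\lesssim\|f\|_{\BMOo}\le\|f\|_\bmolo$ per step, and summing over $k=0,\dots,N-1$ and adding $|f_{T_N}|\le\|f\|_\bmolo$ yields the required estimate. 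The main technical delicacy is the calibration of $\lambda$: the top scale $2^N\ell(Q)\approx\lambda/\epsilon$ must still satisfy $\sqrt n\cdot 2^N\ell(Q)<\delta$, which is precisely what permits invoking Corollary~\ref{lem6.5} at every link while reaching $\ell(T_N)\ge\lambda$ in $\lesssim 1+\log_+(\lambda/\ell(Q))$ steps.
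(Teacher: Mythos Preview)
Your overall strategy—telescoping along Whitney cubes of geometrically increasing sidelength and invoking Corollary~\ref{lem6.5} at each step—is sound, but there is a gap in the construction of the $T_k$. Lemma~\ref{lem_2} only guarantees a point $z_k\in R_k^\circ$ with $\dom(z_k)\ge\epsilon\ell(R_k)/32$; it does \emph{not} guarantee $z_k\in\Omega$. In the proof of Lemma~\ref{lem_2}, if the annulus $A$ (or the second region $A\setminus\overline{B(x,2r)}$) misses $\Omega$, the conclusion is obtained via a point of $\Omega'$. Since $R_k$ can be much larger than $Q$, there is no reason $A$ must meet $\Omega$ even though $Q\subset R_k\cap\Omega$. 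Thus $T_k$ may lie in $E'$ rather than $E$; then $f_{T_k}$ is undefined and Corollary~\ref{lem6.5} (stated for cubes in $E$) does not apply. Your containment argument does correctly show $T_k\subset R_k$ in either case—the problem is the membership in $E$.

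A natural repair is to pass to the matching cube $T_k^*\in E$ via Lemma~\ref{lem_1_jones} whenever $T_k\in E'$, but that requires $\ell(T_k)\le\epsilon\delta/(16n)$, and at the top scale $\ell(T_N)\le 2^N\ell(Q)<320\sqrt n\,\lambda/\epsilon$ this bound fails for the stated threshold on $\lambda$ (you would need roughly $\lambda\lesssim\epsilon^2\delta/n^{3/2}$ rather than $\epsilon^2\delta/n$, so the calibration would have to be redone). The paper sidesteps the $E$-versus-$E'$ ambiguity altogether: it invokes Proposition~\ref{prop-minfat} once (with $\delta'=320n\lambda/\epsilon$) to produce directly a cube $S\in E$ with $\ell(S)\ge\lambda$ at distance $<\delta$ from a point of $Q$, then uses Lemma~\ref{lem6} to bound the length of a single Whitney chain from $Q$ to $S$ by $C_\epsilon(\jo+1)\lesssim 1+\log_+(\lambda/\ell(Q))$, telescoping via Lemma~\ref{lem5_Jones} on adjacent cubes. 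Proposition~\ref{prop-minfat} is precisely the tool that resolves the ambiguity in Lemmas~\ref{lem_2}--\ref{lem3}, and its proof is where the constant $320n(1+\sqrt n\,\epsilon)$ in the hypothesis on $\lambda$ originates.
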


\begin{proof}
There is nothing to prove if $\ell(Q)\geq \lambda$, so we assume that $\ell(Q)\leq \lambda$. 

Given a chain of Whitney cubes $\{Q_i\}_{i=1}^m$ of length $m$, starting at $Q_1 = Q$, we can write
$$|f_Q| \leq \sum_{i=1}^{m-1} |f_{Q_i} - f_{Q_{i+1}}| + |f_{Q_m}|\leq m \|f\|_{\BMOo} + |f|_{Q_m}$$
due to Lemma \ref{lem5_Jones}.   Thus it remain to bound $m$ and $|f|_{Q_m}$.

Let $x \in Q$.  Since $\Omega$ is $\ed$, it is $(\epsilon, \delta')$ for $\delta' < \delta$.  Applying Proposition~\ref{prop-minfat} with $\delta'$ instead of $\delta$, where $\delta' \leq \delta  (\epsilon^{-1} + \sqrt{n})^{-1}$, we get the existence of a Whitney cube $S \in E$ whose sidelength is at least $\epsilon \delta'/(320 n)$, and whose distance from $x$ is less than $\delta' (\epsilon^{-1} + \sqrt{n}) \leq \delta$.  Taking a point $y \in S$ with $|x - y| < \delta$, we can use Lemma~\ref{lem6} to get the chain of cubes $\{Q_i\}_{i=1}^m$  with $Q_m = S$ and 
$$m \lesssim \log\left[\left(1+\frac{|x-y|}{\dist(x,\bOmega)}\right)\left(1+\frac{|x-y|}{\dist(y,\bOmega)}\right)\right] + 1 \lesssim \log\left[\left(1+\frac{|x - y|}{\ell(Q)}\right)\left(1+\frac{|x - y|}{\ell(Q_m)}\right)\right] +1,$$
by the properties of Whitney cubes.  Now since
$$\lambda \leq \frac{\epsilon^2 \delta}{320 n(1 + \sqrt{n} \epsilon)}  = \frac{\epsilon \delta}{320 n(\epsilon^{-1} + \sqrt{n})},$$ we can take $\delta' := 320n\epsilon^{-1} \lambda$, which satisfies the conditions above, and therefore
$$|x - y| < \delta' (\epsilon^{-1} + \sqrt{n})  = 320n  \epsilon^{-1} (\epsilon^{-1} + \sqrt{n}) \lambda = C_{n, \epsilon} \lambda.$$
Moreover, $\ell(Q_m) \geq \epsilon \delta'/(320 n) = \lambda$, 
giving us that $ |f|_{Q_m} \leq \|f\|_\bmolo$ and
$$m \lesssim  \log\left[\left(1+\frac{C_{n, \epsilon} \lambda}{\ell(Q)}\right)\left(1+\frac{C_{n, \epsilon} \lambda}{\lambda}\right)\right] +1\lesssim 1 + \log_+\left(\frac{\lambda}{\ell(Q)}\right).$$
\end{proof}

\subsection{Proof of Theorem~\ref{thm_extension}} \label{sec-ext}
Recall that we assume $\Omega$ to be an $\ed$-domain and $\lambda\leq \led$, where $\led:= \frac{\epsilon^2 \delta}{320 n(1 + \sqrt{n} \epsilon)}$.
For any $f\in \bmolo$, $\Tlam f:\Rn \to \R$ is defined by 
\begin{equation}\label{Ext_operator_T}
	\Tlam f (x) = \begin{cases}
		f(x) & \textup{if }  x\in \Omega; \\
		f_{Q^*} &\textup{if } x\in Q\in E': \ell(Q)\leq \lambda; \\
		0 & \textup{otherwise} .
	\end{cases}	
\end{equation}
where we fixed a mapping of $Q\to Q^*$, as in Lemma~\ref{lem_1_jones} defined for all $Q \in E'$ with $\ell(Q)\leq \lambda$. 

The following lemma, which is the $\bmo(\Rn)$ analogue of Lemma 2.3 in \cite{jones1}, allows us to measure the bmo norm only on dyadic cubes, provided we have control of the differences of averages over adjacent cubes.
\begin{lem}
	\label{lem8}
	Let $\cD(\Rn)$ be the collection of dyadic cubes in $\Rn$. Given $f\in \Loneloc(\Rn)$, denote
	$$
	a_f = \sup\limits_{Q\in \cD(\Rn)}\fint_Q |f(x) - f_Q| dx,
	$$
	$$
	b_f= \sup\limits_{\substack{Q_1,Q_2\in \cD(\Rn)\\\ell(Q_1)=\ell(Q_2)\\Q_1\cap Q_2 \neq \varnothing}} |f_{Q_1}-f_{Q_2}|,
	$$
	and
	$$
	c_f=\sup_{Q\in \cD(\Rn): \ell(Q)\geq\lambda/16\sqrt{n}} |f|_Q. 
	$$
	Then 
	$$\|f\|_{\bmol(\Rn)} \lesssim a_f +b_f+c_f.
	$$
\end{lem}

\begin{proof}
	The inequality
	$$
	\sup_{Q\subset \Omega} \fint_Q |f(x)-f_Q| dx \lesssim a_f+b_f
	$$
	is Lemma 2.3 in \cite{jones1}. More precisely, its proof shows that if $Q\subset \Omega$, $\{Q_i\}$ is the Whitney decomposition of the interior of $Q$ and $Q_0$ is the cube in this decomposition which contains the center of $Q$, then 
	$$
	\fint_Q |f(x)-f_{Q_0}| dx \lesssim a_f + b_f.
	$$
	When $\ell(Q)\geq \lambda$, the sidelength of this chosen cube satisfies $\ell(Q_0)\geq \lambda/(16\sqrt{n})$ and 
	$$|f|_Q \leq \fint_Q |f(x)-f_{Q_0}| dx + |f|_{Q_0} \lesssim  a_f +b_f+c_f.
	$$
\end{proof}	

The last lemma shows that we will prove Theorem \ref{thm_extension} once we establish that for some $C_\epsilon>0$, 
\[
\sup\limits_{\substack{Q\in \cD(\Rn)\\
		\ell(Q)\leq \lambda}} \fint_Q |\Tlam(x) - (\Tlam f)_Q| dx \leq C_\epsilon  \|f\|_{\bmolo},
\]
\[
\sup\limits_{\substack{Q_1,Q_2\in \cD(\Rn) \\\ell(Q_1)=\ell(Q_2)\\Q_1,Q_2 \text{ are adjacent}}} |(\Tlam f)_{Q_1}-(\Tlam f)_{Q_2}| \leq C_\epsilon \|f\|_{\bmolo}
\]
and 
\[
\sup\limits_{\substack{Q\in \cD(\Rn)\\
		\ell(Q)\geq \lambda/16\sqrt{n}}} |\Tlam f|_Q \leq C_\epsilon \|f\|_{\bmolo}.
\]

This is done in Lemma~\ref{lem10} below, using Lemmas~\ref{lem8.5} and \ref{lem9}.

\begin{lem}
\label{lem8.5}
Let $Q\in E \cup E'$.  Then 
$$|(\Tlam f)_{Q}| \lesssim \left(1 + \log_+\left( \frac{\lambda}{\ell(Q)}\right)\right) \|f\|_{\bmolo}.
$$
Moreover, if $Q_1, Q_2 \in E \cup E'$ with $\ell(Q_1) \leq \ell(Q_2)$ and  $\dist(Q_1,Q_2)<\delta$, then 
$$
|(\Tlam f)_{Q_1}-(\Tlam f)_{Q_2}| \lesssim \left(1 + \log_+\left(\frac{\min(\dist(Q_1,Q_2)+ \ell(Q_2), \lambda)}{\ell(Q_1)}\right)\right) \|f\|_{\bmolo}.
$$
The constants in both inequalities depend on $\epsilon$.
\end{lem}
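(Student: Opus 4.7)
The plan is to reduce both estimates to the previously established results on cubes in $E$—namely Proposition~\ref{prop-ave-bound} and Corollary~\ref{lem6.5}—by invoking Lemma~\ref{lem_1_jones} to replace any $Q \in E'$ by its matching cube in $E$, and by exploiting the fact that $\Tlam f$ vanishes on $E'$-cubes larger than $\lambda$ so that the triangle inequality suffices in that regime. Since $\lambda \leq \led \leq \epsilon \delta/(16n)$, every $Q \in E'$ with $\ell(Q) \leq \lambda$ admits a matching cube $Q^* \in E$ with $\ell(Q) \leq \ell(Q^*) \leq 4\ell(Q)$ and $\dist(Q^*, Q) \leq C_{\epsilon,n} \ell(Q)$.

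For the first estimate I will split into three subcases. If $Q \in E$, then $(\Tlam f)_Q = f_Q$ and Proposition~\ref{prop-ave-bound} applies directly. If $Q \in E'$ with $\ell(Q) \leq \lambda$, then $(\Tlam f)_Q = f_{Q^*}$; applying Proposition~\ref{prop-ave-bound} to $Q^*$ and using that $\log_+$ is decreasing in the denominator (since $\ell(Q^*) \geq \ell(Q)$) gives the bound. Finally, if $Q \in E'$ with $\ell(Q) > \lambda$, then $(\Tlam f)_Q = 0$ while also $\log_+(\lambda/\ell(Q)) = 0$, so the inequality is trivial.

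For the second estimate I will split according to whether $\dist(Q_1, Q_2) + \ell(Q_2) > \lambda$. If so, the $\min$ on the right equals $\lambda$, and the triangle inequality $|(\Tlam f)_{Q_1} - (\Tlam f)_{Q_2}| \leq |(\Tlam f)_{Q_1}| + |(\Tlam f)_{Q_2}|$ together with the first estimate (and $\ell(Q_1) \leq \ell(Q_2)$) yields a bound $\lesssim (1 + \log_+(\lambda/\ell(Q_1)))\|f\|_\bmolo$, as required. If instead $\dist(Q_1,Q_2) + \ell(Q_2) \leq \lambda$, then in particular $\ell(Q_i) \leq \lambda$ for both $i$; set $\tilde Q_i := Q_i$ when $Q_i \in E$ and $\tilde Q_i := Q_i^*$ when $Q_i \in E'$, so $\tilde Q_i \in E$ and $(\Tlam f)_{Q_i} = f_{\tilde Q_i}$. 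Using the matching-cube bounds, $\dist(\tilde Q_1, \tilde Q_2) \lesssim \dist(Q_1, Q_2) + \ell(Q_1) + \ell(Q_2) \lesssim \lambda$; the specific choice $\lambda \leq \led$ keeps this strictly below $\delta$, so Corollary~\ref{lem6.5} applies to $\tilde Q_1, \tilde Q_2$ after swapping them if necessary to respect the size ordering.

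The main care point is the bookkeeping in the second sub-case: the matching step can reverse the sidelength ordering, but since $\min(\ell(\tilde Q_1), \ell(\tilde Q_2)) \geq \ell(Q_1)$ and $\dist(\tilde Q_1, \tilde Q_2) + \max(\ell(\tilde Q_1), \ell(\tilde Q_2)) \lesssim \dist(Q_1, Q_2) + \ell(Q_2)$, the logarithmic factor produced by Corollary~\ref{lem6.5} will, after these substitutions, be bounded by $1 + \log_+((\dist(Q_1,Q_2)+\ell(Q_2))/\ell(Q_1))$, which in this sub-case coincides with the stated $\min$ expression. The only real obstacle is to verify that passing to matching cubes does not inflate $\dist(\tilde Q_1, \tilde Q_2)$ past $\delta$, and this is precisely where the smallness $\lambda \leq \led$ is invoked; the rest of the argument consists of monotonicity observations for $\log_+$ and constant-tracking depending on $\epsilon$ and $n$.
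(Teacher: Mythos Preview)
Your proposal is correct and follows essentially the same approach as the paper: reduce to Proposition~\ref{prop-ave-bound} and Corollary~\ref{lem6.5} by replacing $E'$-cubes with their matching cubes from Lemma~\ref{lem_1_jones}, handle large $E'$-cubes trivially since $\Tlam f$ vanishes there, and split the second estimate according to whether $\dist(Q_1,Q_2)+\ell(Q_2)$ exceeds $\lambda$, using the triangle inequality in the large case and the explicit constant $\led$ to keep $\dist(\tilde Q_1,\tilde Q_2)<\delta$ in the small case. The paper's proof is terser but the logic is identical; your extra bookkeeping about the possible reversal of the sidelength ordering and the monotonicity of $\log_+$ is sound.
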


\begin{proof}
The first inequality follows from the definition of $\Tlam$, Proposition~\ref{prop-ave-bound} and Lemma~\ref{lem_1_jones}, noting that the matching cube $Q^*$ satisfies $\ell(Q^*) \geq \ell(Q)$.

To establish the second inequality, we may thus assume $\dist(Q_1,Q_2) + \ell(Q_2) < \lambda$. Then both cubes have sidelength bounded by $\lambda$, so by the definition of $\Tlam$ and Lemma~\ref{lem_1_jones} we can write
$$|(\Tlam f)_{Q_1}-(\Tlam f)_{Q_2}|  = |f_{S_1} - f_{S_2}|$$
for $S_1, S_2 \in E$ with $\frac{1}{4} \leq \frac{\ell(S_i)}{\ell(Q_i)} \leq 4$ (possibly $S_i = Q_i$) and $\dist(S_i,Q_i)\leq C_{\epsilon,n} \ell(Q_i)$, where $C_{\epsilon,n} = 5\sqrt{n} + 8n \cdot \epsilon^{-2}$.  Thus
\begin{eqnarray*}
	\dist(S_1, S_2) 
	& \leq &  C_{\epsilon,n}\ell(Q_1) + \diam(Q_1) + \dist(Q_1, Q_2) + \diam(Q_2) +  C_{\epsilon,n}\ell(Q_2)\\
	& \leq & 2(C_{\epsilon,n} + \sqrt{n}) \ell(Q_2) + \dist(Q_1, Q_2)\\
	& < & 2(6\sqrt{n} + 8n \cdot \epsilon^{-2})\lambda \\
	& \leq & 2(6\sqrt{n} + 8n \cdot \epsilon^{-2})\frac{\epsilon^2 \delta}{320 n(1 + \sqrt{n} \epsilon)} < \delta,
\end{eqnarray*}
and the result follows from Corollary~\ref{lem6.5}.
\end{proof}

\begin{lem}
\label{lem9}
Let $Q$ be a dyadic cube in $\Rn$ with $\ell(Q)<\delta$.  Assume $Q \not\subset \Omega$  and $Q$ is not contained inside any Whitney cube in $E'$.  
Then there exists a  Whitney cube  $Q_0\in E\cup E'$ such that $Q_0 \subset Q$, $|Q_0| \gtrsim \epsilon^n |Q|$ and
$$
\fint_Q |\Tlam f(x)-(\Tlam f)_{Q_0}| \leq C_\epsilon \|f\|_{\bmolo}.
$$
\end{lem}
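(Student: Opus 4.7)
My plan is to combine the geometric structure of the Whitney decompositions inside $Q$ with the averaging estimates from Lemma~\ref{lem8.5}.

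First I would apply Lemma~\ref{lem3} to $Q$: this produces a cube $Q_0 \in E \cup E'$ with $\ell(Q_0) \geq \epsilon \ell(Q)/(160\sqrt{n})$ and either $Q_0 \supset Q$ or $Q \supset Q_0$. The hypothesis $Q \not\subset \Omega$ rules out $Q_0 \in E$ with $Q_0 \supset Q$, and the hypothesis that $Q$ is not in any Whitney cube of $E'$ rules out $Q_0 \in E'$ with $Q_0 \supset Q$. Hence $Q \supset Q_0$ and $|Q_0| \geq (\epsilon/(160\sqrt{n}))^n |Q|$. The same reasoning shows that every dyadic cube of $E \cup E'$ meeting $Q$ is contained in $Q$, so the cubes $\{Q_i\} \subset E \cup E'$ lying inside $Q$ cover $Q$ up to $Q \cap \bOmega$, which has Lebesgue measure zero by Lemma 2.3 of \cite{jones2}.

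Using $(\Tlam f)_{Q_0}$ as reference constant, I split
\begin{equation*}
\int_Q |\Tlam f - (\Tlam f)_{Q_0}|\, dx \leq \sum_i \int_{Q_i} |\Tlam f - (\Tlam f)_{Q_i}|\, dx + \sum_i |Q_i|\, |(\Tlam f)_{Q_i} - (\Tlam f)_{Q_0}|.
\end{equation*}
The first sum is at most $|Q|\|f\|_{\BMOo}$ since $\Tlam f = f$ on cubes of $E$ and is constant on cubes of $E'$. For the second sum I would apply Lemma~\ref{lem8.5} to each pair $(Q_i, Q_0)$. Since both cubes lie in $Q$ we have $\dist(Q_i, Q_0) \leq \sqrt{n}\,\ell(Q) < \delta$, which can be enforced by reducing $\delta$ to $\delta/\sqrt{n}$ at the outset (permissible since $\Omega$ is an $(\epsilon, \delta')$-domain for every $\delta' \leq \delta$). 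Setting $L := \min(\sqrt{n}\ell(Q), \lambda)$, Lemma~\ref{lem8.5} gives
\begin{equation*}
|(\Tlam f)_{Q_i} - (\Tlam f)_{Q_0}| \lesssim \left(1 + \log_+\left(\tfrac{L}{\ell(Q_i)}\right)\right) \|f\|_\bmolo,
\end{equation*}
with implicit constant depending on $\epsilon$; cubes $Q_i$ with $\ell(Q_i) \geq \ell(Q_0)$ contribute only a constant, since then $L/\ell(Q_i) \leq L/\ell(Q_0) \lesssim 1$.

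It remains to bound $\sum_i |Q_i|\log_+(L/\ell(Q_i))$ for the cubes with $\ell(Q_i) < \ell(Q_0)$. Using $\ell(Q_i) \sim \dom(x)$ for $x \in Q_i$ (valid whether $Q_i \in E$ or $Q_i \in E'$), this sum is comparable to $\int_Q \log_+(L/\dom(x))\, dx$, and by a level-set computation it suffices to establish the porosity-type measure estimate
\begin{equation*}
|\{x \in Q : \dom(x) < r\}| \lesssim (r/\ell(Q))^\alpha\, |Q|, \qquad 0 < r \leq \ell(Q),
\end{equation*}
for some $\alpha = \alpha(\epsilon, n) > 0$. This follows from the fact that $\bOmega$ is uniformly porous in $\ed$-domains: Lemma~\ref{lem_2} guarantees that every dyadic subcube $Q' \subset Q$ of sidelength less than $\delta$ contains a ball of radius comparable to $\epsilon\,\ell(Q')$ disjoint from $\bOmega$, which is exactly the porosity condition, and porous sets admit such a power-type measure decay with $\alpha$ depending only on the porosity constant and the dimension. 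This final step -- converting the qualitative $\ed$ plumpness of Lemma~\ref{lem_2} into the quantitative measure decay needed to tame the logarithmic integral -- is the main obstacle; the earlier steps amount to bookkeeping with the machinery already in hand.
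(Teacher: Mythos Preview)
Your argument is correct, and the underlying geometric input is the same as the paper's, but the organization is genuinely different.  The paper does \emph{not} decompose $Q$ into the Whitney cubes of $E\cup E'$ that it contains.  Instead it runs a Calder\'on--Zygmund--type stopping-time iteration: partition $Q$ into $2^{kn}$ dyadic subcubes of sidelength $2^{-k}\ell(Q)$ (with $2^{-k}\sim\epsilon$), put those contained in some Whitney cube into a ``good'' family $F_1$, and subdivide the remainder $R_1$ again.  Lemma~\ref{lem3} guarantees that at every step a fixed fraction $\epsilon_1^n$ of the measure becomes good, so $|R_N|\le(1-\epsilon_1^n)^N|Q|$.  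Each good cube at generation $i$ sits inside a Whitney cube $S^i_j$ of sidelength $\ge 2^{-ki}\ell(Q)$, so Lemma~\ref{lem8.5} gives $|(\Tlam f)_{Q^i_j}-(\Tlam f)_{Q_0}|\lesssim i\,\|f\|_\bmolo$, and the final sum is $\sum_i i(1-\epsilon_1^n)^{i-1}$.

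Your route instead black-boxes the geometric decay as the porosity estimate $|\{x\in Q:\dom(x)<r\}|\lesssim (r/\ell(Q))^\alpha|Q|$ and then integrates $\log_+(L/\dom)$ against it.  This is cleaner conceptually---it separates the geometric fact (porosity of $\partial\Omega$, which is exactly what Lemma~\ref{lem_2} encodes) from the analytic bookkeeping---but the standard proof of that porosity-to-measure-decay implication \emph{is} precisely the iterative subdivision the paper writes out, so you have really relocated the same work rather than avoided it.  One minor point: your reduction $\ell(Q_i)\sim\dom(x)$ for $x\in Q_i$ needs a word of care when $Q_i\in E'$, since then $\ell(Q_i)\sim\dist(x,\partial\Omega')$; but $\partial\Omega'\subset\partial\Omega$ gives $\dom(x)\le\dist(x,\partial\Omega')\lesssim\ell(Q_i)$, which is the inequality you actually need.
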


\begin{proof}
We follow the proof of Lemma 2.11 in \cite{jones1}.
By Lemma \ref{lem3}, there is a cube $Q_0\in E\cup E'$ with $Q_0\supset Q$ or $Q\supset Q_0$ and $\ell(Q_0)\geq \frac{\epsilon}{160\sqrt{n}} \ell(Q)$.  The assumptions on $Q$ rule out the first case, so  we have $2^{-k} \ell(Q) \leq \ell(Q_0) < \ell(Q)$, where $k$ is the positive integer for which $2^{-k} \leq \frac{\epsilon}{160\sqrt{n}} < 2^{-k+1}$.

Partition $Q$ into $2^{kn}$ dyadic cubes $\{Q^1_j\}$ of sidelength $2^{-k} \ell(Q)$.  If there exists some Whitney cube $S^1_j \in E\cup E'$ with $Q^1_j \subset S^1_j$, we say that $Q^1_j$ belongs to $F_1$. Since at least one of the $Q^1_j$  is contained in $Q_0$,  $F_1 \neq \varnothing$,  and therefore, denoting by $R_1$ the union of all  subcubes not in $F_1$, we have  
$$|R^1| = |Q| -\sum_{Q^1_j  \in F_1}|Q^1_j | \leq  (1 - 2^{-kn})|Q| \leq (1-\epsilon_1^n)|Q|,$$
where we set $\epsilon_1:= \frac{\epsilon}{320\sqrt{n}}$.     

Partitioning each of cubes $Q^1_j \not\in F_1$ further  into  $2^{kn}$ dyadic cubes of sidelength $2^{-k} \ell(Q^1_j) = 2^{-2k}\ell(Q)$, we denote all of those cubes by $\{Q^2_{j'}\}$.
By Lemma \ref{lem3} applied to  $Q^1_j \not\in F_1$, we again have that at least one of the subcubes $Q^2_{j'}$ is contained in a Whitney cube $S^2_{j'} \in E\cup E'$, and furthermore $S^2_{j'} \subset Q^1_j$.   We collect all the subcubes  $Q^2_{j'}$ which lie in Whitney cubes into a collection $F_2$, and denote the union of the remaining subcubes by $R_2$.
Then again
$$
|R^2|\leq (1-\epsilon_1^n)^2 |Q|. 
$$
We continue this process recursively.  For each $N \in \N$, at the $N$th stage we have 
$$Q = \bigcup_{i = 1}^N\bigcup_{Q^i_j  \in F_i}Q^i_j \cup R_N,$$
where the union is over a finite collection of pairwise disjoint cubes and the remainder set satisfies 
\begin{equation}
	\label{eq-remainder}
	|R_N| \leq  (1-\epsilon_1^n)^N |Q|.
\end{equation}
Letting $N \ra 0$, we have that $\chi_Q = \displaystyle{\sum_{i = 1}^\infty\sum_{Q^i_j  \in F_i}\chi_{Q^i_j}}$ almost everywhere and by the monotone convergence theorem

\begin{eqnarray*}
	\fint_Q |\Tlam f(x) -(\Tlam f)_{Q_0}| dx & = & \sum_{i = 1}^\infty\sum_{Q^i_j  \in F_i} \fint_Q |\Tlam f(x) -(\Tlam f)_{Q_0}| dx\\
	& \leq & 
	\sum_{i = 1}^\infty\sum_{Q^i_j  \in F_i}\frac{|Q^i_j|}{|Q|} \left( \fint_{Q^i_j} |\Tlam f(x)-(\Tlam f)_{Q^i_j}| dx + |(\Tlam f)_{Q^i_j}-(\Tlam f)_{Q_0}| \right).
\end{eqnarray*}
Since each $Q^i_j$ is contained in a Whitney cube in $E\cup E'$, either $Q^i_j \subset \Omega$ or $\Tlam f$ is constant on  $Q^i_j$, so
\begin{equation}
	\label{eq-Qij}
	\fint_{Q^i_j} |\Tlam f(x)-(\Tlam f)_{Q^i_j}| dx  \leq  \|f\|_\BMOo \leq 2\|f\|_{\bmolo}.
\end{equation}
Moreover, by the selection of $Q^i_j$, we know that the Whitney cube containing $Q^i_j$, denoted by $S^i_j$, must have $\ell(S^i_j) \leq 2^{-k(i-1)}\ell(Q) = 2^k\ell(Q^i_j)$.  If $S^i_j \in E'$ then $\Tlam f$ is constant on $S^i_j$ and $(\Tlam f)_{S^i_j} =  (\Tlam f)_{Q^i_j}$, while if $S^i_j \in E$ then 
$$|(\Tlam f)_{Q^i_j} - (\Tlam f)_{S^i_j}| \leq \fint_{Q^i_j} |\Tlam f(x)-(\Tlam f)_{S^i_j}| dx \leq 2^{kn}\fint_{S^i_j} |\Tlam f(x)-(\Tlam f)_{S^i_j}| dx \leq C_\epsilon\|f\|_{\bmolo}.$$
Recalling that $Q_0 \subset Q$ with $Q_0 \in E \cap E'$   and $\ell(Q_0) \geq 2^{-k}\ell(Q) \geq \ell(Q^i_j)$, we  apply Lemma~\ref{lem8.5} to $S^i_j$ and $Q_0$ to get
\begin{eqnarray*}
	|(\Tlam f)_{Q^i_j}-(\Tlam f)_{Q_0}| & \leq & C_\epsilon \|f\|_{\bmolo} + |(\Tlam f)_{S^i_j} - (\Tlam f)_{Q_0}|\\
	& \lesssim &  \|f\|_{\bmolo} +
	\left(1 + \log_+\left(\frac{\dist(S^i_j,Q_0) + \ell(Q_0)}{\ell(S^i_j)}\right)\right) \\
	&\lesssim & \|f\|_{\bmolo} + \log_+\left(\frac{\diam(Q)}{2^{-k(i-1)}\ell(Q)}\right)\leq C_{\epsilon} \; i \|f\|_{\bmolo}
\end{eqnarray*}
Finally, combining the previous estimate with \eqref{eq-Qij}, noting that 
$\sum_j |Q^i_j| \leq |R_{i-1}|$,
and using  \eqref{eq-remainder}, we have
$$
\fint_Q |\Tlam f(x) -(\Tlam f)_{Q_0}| dx \lesssim \sum_{i = 1}^\infty \sum_{Q^i_j  \in F_i}\frac{|Q^i_j|}{|Q|}i  \|f\|_{\bmolo}  \lesssim \sum_{i = 1}^\infty(1-\epsilon_1^n)^{i-1}i  \|f\|_{\bmolo} \lesssim  \|f\|_{\bmolo}
$$
with a constant depending on $\epsilon$.
\end{proof}

\begin{lem}
\label{lem10}
There are constants depending on $\epsilon$ such that
$$a_f = \sup\limits_{\substack{Q\in \cD(\Rn)\\
		\ell(Q)\leq \lambda}} \fint_Q |\Tlam(x) - (\Tlam f)_Q| dx \lesssim  \|f\|_{\bmolo},
$$
$$b_f= \sup\limits_{\substack{Q_1,Q_2\in \cD(\Rn) \\\ell(Q_1)=\ell(Q_2)\\Q_1,Q_2 \text{ are adjacent}}} |(\Tlam f)_{Q_1}-(\Tlam f)_{Q_2}| \leq \|f\|_{\bmolo}$$
and 
$$c_f = \sup\limits_{\substack{Q\in \cD(\Rn)\\
		\ell(Q)\geq \lambda/16\sqrt{n}}} |\Tlam f|_Q \lesssim \|f\|_{\bmolo}.
$$
\end{lem}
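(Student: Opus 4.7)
The plan is to prove each of the three bounds by case analysis on the position of each dyadic cube relative to the Whitney decompositions $E$ and $E'$. The main tools are Lemma~\ref{lem9} (controlling the oscillation of $\Tlam f$ on any dyadic cube that straddles $\bOmega$), Lemma~\ref{lem8.5} (comparing averages of $\Tlam f$ over nearby cubes in $E\cup E'$), and Proposition~\ref{prop-ave-bound} (giving logarithmic control of averages of $f$ over Whitney cubes in $E$). It is natural to prove $a_f$ first, then $c_f$, and finally $b_f$, since the argument for $b_f$ will invoke the $c_f$ bound in a degenerate regime.

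For $a_f$: fix $Q \in \cD(\Rn)$ with $\ell(Q)\leq \lambda <\delta$. Three cases arise. If $Q\subset \Omega$, then $\Tlam f = f$ on $Q$ and $\fint_Q |\Tlam f -(\Tlam f)_Q| \leq \|f\|_{\BMOo}\leq \|f\|_\bmolo$. If $Q$ sits inside some Whitney cube of $E'$, then $\Tlam f$ is constant on $Q$ (either $0$ or $f_{S^*}$) and the oscillation vanishes. Otherwise Lemma~\ref{lem9} applies, producing $Q_0\in E\cup E'$ with $\fint_Q |\Tlam f - (\Tlam f)_{Q_0}| \leq C_\epsilon \|f\|_\bmolo$, from which the desired bound follows by the triangle inequality.

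For $c_f$: fix $Q\in\cD(\Rn)$ with $\ell(Q) \geq \lambda/(16\sqrt{n})$. If $\ell(Q)\geq \delta$, partition $Q$ into dyadic subcubes of sidelength in $[\delta/2,\delta)$, apply the estimate to each, and average, reducing to the case $\ell(Q)<\delta$. The same three sub-cases then appear. For $Q\subset \Omega$ with $\ell(Q)\geq \lambda$, $|\Tlam f|_Q = |f|_Q \leq \|f\|_\bmolo$ by definition; for $\lambda/(16\sqrt n) \leq \ell(Q) < \lambda$, write $|f|_Q \leq \fint_Q |f - f_{Q_0}| + |f_{Q_0}|$ for a suitably chosen $Q_0\in E$ (e.g.\ the Whitney cube of the interior-of-$Q$ decomposition containing the center, as in the proof of Lemma~\ref{lem8}, so that $\ell(Q_0)\geq \ell(Q)/(16\sqrt{n})$), and bound the two terms using $a_f$ and Proposition~\ref{prop-ave-bound}. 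If $Q\subset S \in E'$, either $\Tlam f$ vanishes on $Q$ (when $\ell(S)>\lambda$) or $|\Tlam f|_Q = |f_{S^*}|$, the latter bounded by Proposition~\ref{prop-ave-bound} using $\ell(S^*)\geq \ell(Q)\geq \lambda/(16\sqrt{n})$. In the remaining case Lemma~\ref{lem9} yields $Q_0 \in E\cup E'$ with $\ell(Q_0) \gtrsim \epsilon\,\ell(Q)\gtrsim \lambda$, so that $|\Tlam f|_Q \leq \fint_Q |\Tlam f -(\Tlam f)_{Q_0}| + |(\Tlam f)_{Q_0}|$ is bounded by Lemma~\ref{lem9} and Lemma~\ref{lem8.5}.

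For $b_f$: fix adjacent dyadic $Q_1, Q_2$ of common sidelength $s$. When $s > \lambda/8$, the triangle inequality together with the $c_f$ bound immediately gives $|(\Tlam f)_{Q_1} - (\Tlam f)_{Q_2}| \leq 2 c_f \lesssim \|f\|_\bmolo$, so we may assume $s\leq \lambda/8$. To each $Q_i$ associate a surrogate Whitney cube $S_i \in E\cup E'$ with $\ell(S_i)$ and $\dist(Q_i,S_i)$ both comparable to $s$ (constants depending on $\epsilon$) and with $|(\Tlam f)_{Q_i} - (\Tlam f)_{S_i}| \lesssim \|f\|_\bmolo$, by the same case split as in $a_f$ (a Whitney cube of $E$ inside $Q_i$ when $Q_i\subset \Omega$; $S_i := S$ when $Q_i \subset S \in E'$; the cube given by Lemma~\ref{lem9} otherwise). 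Adjacency of $Q_1$ and $Q_2$ then forces $\dist(S_1,S_2) \lesssim s \leq \lambda/8 < \delta$, so Lemma~\ref{lem8.5} produces $|(\Tlam f)_{S_1} - (\Tlam f)_{S_2}| \lesssim \|f\|_\bmolo$ and the triangle inequality finishes the estimate. The main obstacle, and the source of the $\epsilon$-dependent constants, is the $b_f$ bookkeeping: constructing the surrogate cubes $S_i$ uniformly in the mixed cases (e.g.\ $Q_1\subset \Omega$ while $Q_2$ straddles $\bOmega$) and verifying that the ratio $(\dist(S_1,S_2) + \ell(S_2))/\ell(S_1)$ stays bounded independently of $\lambda$, so the logarithmic factor in Lemma~\ref{lem8.5} remains bounded.
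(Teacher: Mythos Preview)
Your proposal is correct and follows essentially the same route as the paper: the identical three-way case split ($Q\subset\Omega$, $Q$ inside a Whitney cube of $E'$, or $Q$ straddling $\bOmega$), handled via Lemma~\ref{lem9}, Lemma~\ref{lem8.5}, and Proposition~\ref{prop-ave-bound}. The only differences are cosmetic---you partition large cubes at scale $\delta$ rather than $\lambda$ for $c_f$, and use the threshold $\lambda/8$ rather than $\lambda$ in $b_f$---and both your argument and the paper's are equally informal when invoking Lemma~\ref{lem9} in the non-straddling sub-cases.
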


\begin{proof}
Let us first consider a dyadic cube $Q\subset \Rn$ with $\ell(Q)\leq \lambda$. If $Q \subset \Omega$ or $Q \subset S \in E'$ then, by definition, the oscillation of $\Tlam$ on $Q$ is bounded by $\|f\|_\bmolo$ or zero.  Otherwise, as $\ell(Q)<\delta$, we have a Whitney subcube $Q_0 \in E\cup E'$ for which Lemma~\ref{lem9} gives 
\begin{equation}
	\label{lem10_1}
	\fint_Q |\Tlam f(x) - (\Tlam f)_Q| dx \leq 2\fint_Q |\Tlam f(x) - (\Tlam f)_{Q_0}| dx \lesssim \|f\|_{\bmolo}.
\end{equation}
If in addition  $\ell(Q) \geq \frac{\lambda}{16\sqrt{n}}$, then we also have, by applying Lemma~\ref{lem8.5} to $Q_0$,
$$|Tf_\lambda|_Q \leq \fint_{Q} |\Tlam f(x) - (\Tlam f)_{Q_0}| dx +  |(\Tlam f)_{Q_{0}}| \lesssim (1 + \log_+(16\sqrt{n})) \|f\|_{\bmolo}. $$

If $Q\subset \Rn$ is dyadic with $\ell(Q) > \lambda$, then  we can partition $Q$ into $2^k$ equal dyadic subcubes ${Q_i}$ of sidelength $2^{-k} \ell(Q)$, where $k\in \N$ is such that $2^{-k} \leq \lambda/\ell(Q)< 2^{1-k}$, and write the mean $|Tf_\lambda|_Q$ as the average of the means taken over the $Q_i$.
Since   $\ell(Q_i) \leq \lambda < \delta$,
we can apply Lemma~\ref{lem9} to each of the $Q_i$, and denote by $Q_{i,0}$ the corresponding Whitney cubes.  Then applying Lemma~\ref{lem8.5} to the 
$Q_{i,0}$, and using the fact that $\ell(Q_i)=2^{-k} \ell(Q) \geq \lambda/2$, we have
\begin{eqnarray*}
	|Tf_\lambda|_Q& \leq & \sum_{i} \frac{|Q_i|}{|Q|} \cdot \left( \fint_{Q_i} |\Tlam f(x) - (\Tlam f)_{Q_{i,0}}| dx +  |(\Tlam f)_{Q_{i,0}}| \right) \\
	& \lesssim & \|f\|_{\bmolo} +  \left(1 + \log_+\left( \frac{\lambda}{2^{-k}\ell(Q)}\right)\right) \|f\|_{\bmolo} \lesssim  \|f\|_{\bmolo}.
\end{eqnarray*}

Finally, consider adjacent dyadic cubes $Q_1,Q_2\subset \Rn$ of equal sidelength.  If this sidelength is at least $\lambda$, then we can apply the previous estimate to get
$$
|\Tlam f_{Q_1} - (\Tlam f)_{Q_2}| \lesssim \|f\|_{\bmolo}.
$$
Hence, we assume that  $\ell(Q_1)=\ell(Q_2) < \lambda$ and again, as $\lambda < \delta$, apply Lemma \ref{lem9} to get Whitney cubes $Q_{1,0}\subset Q_1$ and $Q_{2,0}\subset Q_2$ such that 
$$
|(\Tlam f)_{Q_i} -(\Tlam f)_{Q_{i,0}}| \leq \fint_{Q_i} |\Tlam f(x)-(\Tlam f)_{Q_{i,0}}| \lesssim \|f\|_{\bmolo}, \quad  i=1,2.
$$
Moreover, by Lemma~\ref{lem8.5},
$$
|(\Tlam f)_{Q_{1,0}}-(\Tlam f)_{Q_{2,0}}| \lesssim \left(1 + \log_+\left(\frac{\dist(Q_{1,0},Q_{2,0})}{\ell(Q_{1,0})} + 1\right)\right) \|f\|_{\bmolo} \lesssim  \|f\|_{\bmolo}
$$
since $\dist(Q_{1,0},Q_{2,0}) \leq \dist(Q_{1},Q_{2}) + \diam(Q_1) + \diam(Q_2)  = 2\sqrt{n}\ell(Q_1)$ and $\ell(Q_{1,0}) \gtrsim \epsilon \ell(Q_1)$.
Hence, 
$$
|(\Tlam f)_{Q_1} - (\Tlam f)_{Q_2}| \leq \sum_{i = 1,2}|(\Tlam f)_{Q_i} -(\Tlam f)_{Q_{i,0}}| + |(\Tlam f)_{Q_{1,0}} - (\Tlam f)_{Q_{2,0}}|  \lesssim  \|f\|_{\bmolo}.
$$
\end{proof}

\providecommand{\bysame}{\leavevmode\hbox to3em{\hrulefill}\thinspace}
\providecommand{\MR}{\relax\ifhmode\unskip\space\fi MR }
\providecommand{\MRhref}[2]{%
\href{http://www.ams.org/mathscinet-getitem?mr=#1}{#2}
}
\providecommand{\href}[2]{#2}


\begin{thebibliography}{1}
\bibitem{martel}
J. Azzam, S. Hofmann, J.M. Martell, M. Mourgoglou and X. Tolsa, \emph{Harmonic measure and quantitative connectivity: geometric characterization of the $L^p$-solvability of the Dirichlet problem}, Invent. math. \textbf{222}:3 (2020), 881--993.


\bibitem{mitreas}
K. Brewster, D. Mitrea, I. Mitrea and M. Mitrea, \emph{Extending Sobolev functions with partially vanishing traces from locally $(\varepsilon,\delta)$-domains and applications to mixed boundary problems}, J. Funct. Anal. \textbf{226}:7 (2014), 4314--4421.

\bibitem{BreitCianchi}
 D. Breit and A. Cianchi, \emph{Symmetric gradient Sobolev spaces endowed with rearrangement-invariant norms}. Adv. Math. \textbf{391} (2021), Paper No. 107954, 101 pp. 
 
\bibitem{BrezisNirenberg}
H. Brezis and L. Nirenberg, \emph{Degree theory and {BMO}. {I}. Compact manifolds without boundaries}, Selecta Math.  \textbf{1}:2 (1995), 197--263.

\bibitem{brudnyis1}
A. Brudnyi and Yu. Brudnyi, \emph{Methods of geometric analysis in extension and trace problems.
	Volume 1}, Monographs in Mathematics, Birkh\"{a}user/Springer Basel AG, Basel, 2012, xx+414

\bibitem{brudnyis2}
A. Brudnyi and Yu. Brudnyi, \emph{Methods of geometric analysis in extension and trace problems.
	Volume 2}, Monographs in Mathematics, Birkh\"{a}user/Springer Basel AG, Basel, 2012, xx+414


\bibitem{herron2}
S. Buckley, D. Herron, \emph{Uniform spaces and weak slice spaces}, Conform. Geom. Dyn. \textbf{11} (2007),  191--206.

\bibitem{ButaevDafni}
A. Butaev and G. Dafni, \emph{Approximation and Extension of Functions of Vanishing Mean Oscillation}, J. Geom. Anal. \textbf{31} (2021), 6892--6921.

\bibitem{christ} M. Christ, \emph{The extension problem for certain function spaces involving fractional orders of differentiability}, Ark. Mat. \textbf{22}:1--2 (1984), 63--81.

\bibitem{dafni_yue} G. Dafni and H. Yue, {\em Some characterizations of local bmo and $h^1$ on metric measure spaces},  
Anal. Math. Phys. {\bf 2}:3 (2012), 285--318.

\bibitem{devore_sharpley} R. DeVore and  R. Sharpley, \emph{Maximal functions  measuring smoothness}, Mem. Amer. Math. Soc  \textbf{47}:293 (1984), viii+115 pp.

\bibitem{devore_sharpley2}
R. DeVore and  R. Sharpley,
\emph{Besov spaces on domains in $\mathbb{R}^d$}, Trans. Amer. Math. Soc. \textbf{335}:2 (1993) 843--864.

\bibitem{Rajala} M. Garc\'ia-Bravo, T. Rajala and J. Takanen, \emph{A necessary condition for Sobolev extension domains in higher dimensions},
 arXiv preprint arXiv:2207.00541, 2022.

\bibitem{Gehring_Palka}
F. Gehring and B. Palka,  \emph{Quasiconformally homogeneous domains}, J. Analyse Math. \textbf{30} (1976), 172--199.

\bibitem{gehring_osgood}
F. Gehring and B. Osgood, \emph{Uniform domains and the quasihyperbolic metric}, J. Analyse Math. \textbf{36} (1979), 50--74.


\bibitem{goldberg}
D. Goldberg, \emph{A local version of real Hardy spaces}, Duke Math. J.
\textbf{46}:1 (1979), 27--42. 

\bibitem{herron}
D. Herron, P. Koskela, \emph{Uniform, Sobolev extension and quasiconformal circle domains}, J. Anal. Math. \textbf{57} (1991), 172--202. 


\bibitem{jones1}
P. Jones, \emph{Extension theorems for {BMO}}, Indiana Univ. Math. J.
\textbf{29}:1 (1980),  41--66.

\bibitem{jones2}
P. Jones, \emph{Quasiconformal mappings and extendability of functions in Sobolev spaces}, Acta Math. \textbf{147}:1-2 (1981), 71--88.

\bibitem{koskela}
P. Koskela, Y. Zhang, and Y. Zhou, \emph{Morrey-Sobolev Extension Domains}, J. Geom. Anal. \textbf{27}:2 (2017), 1413--1434.

\bibitem{martio_sarvas} 
O. Martio and J. Sarvas, \emph{Injectivity theorems in plane and space}, Ann. Acad. Sci. Fenn. Ser. A I Math., \textbf {4}:2 (1979), 383--401.


\bibitem{reimann_rychener} 
H. Reimann, and T. Rychener, \emph{Funktionen beschr\"{a}nkter mittlerer Oszillation}, Vol. 487 of {\it Lecture Notes in Mathematics}. Springer-Verlag, Berlin-New York, (1975). 

\bibitem{LukeRogers} L. G. Rogers, \emph{Degree-independent Sobolev extension on locally uniform domains}.
J. Funct. Anal. \textbf{235} (2006),  619--665. 

\bibitem{shvartsman_zobin}
P. Shvartsman and N. Zobin, \emph{On planar Sobolev {$L_p^m$}-extension domains}, Adv. Math. \textbf{287} (2016), 237--346  

\bibitem{triebel}
H. Triebel, \emph{Theory of function spaces. {II}}, Monographs in Mathematics, Birkh\"{a}user Verlag, Basel, Vol. 84, (1992), viii+370 

\bibitem{vaisala} 
J. Vaisala, \emph{Uniform domains}, Tohoku Math. J.(2). {\bf 40}:1 (1988), 101--118.

\bibitem{vaisala2}
J. Vaisala, \emph{Free quasiconformality in Banach spaces. {II}}, Ann. Acad. Sci. Fenn. Ser. A I Math. \textbf{16} (1991),  255--310


\bibitem{VG}
S. Vodop'yanov and A. Greshnov, \emph{Quasiconformal mappings and {BMO} spaces on metric structures},  Siberian Adv. Math., \textbf{8} (1998), 132--150.




\end{thebibliography}
\end{document}